\documentclass{article}
\usepackage{graphicx} 

\usepackage{amssymb, enumerate}
\usepackage{mathtools}
\usepackage{amsmath}
\usepackage{amsthm}
\usepackage{hyperref}
\usepackage{enumitem}    
\usepackage[capitalise]{cleveref}
\usepackage{xcolor}
\usepackage{tikz}
\usetikzlibrary{calc}
\usepackage{geometry}

 \date{}

\newtheorem{proposition}{Proposition}
\newtheorem{theorem}{Theorem}
\newtheorem{corollary}{Corollary}
\newtheorem{lemma}{Lemma}
\newtheorem{problem}{Problem}
\newtheorem{conjecture}{Conjecture}
\newtheorem*{conjecture*}{Conjecture}
\newtheorem*{fact*}{Fact}

\renewcommand{\pm}{\Phi}

\providecommand{\keywords}[1]
{
  {\small	
  \textbf{\textit{Keywords---}} #1}
}

\providecommand{\msc}[1]
{
  {\small	
  \textbf{\textit{Math. Subj. Class. (2020)---}} #1}
}

\title{On the number of perfect matchings in planar graphs}

\begin{document}

\author{Jan Goedgebeur\inst{1,2}
\and
Jorik Jooken\inst{1}
\and
Michiel Provoost\inst{1}
\and
Carol T. Zamfirescu\inst{2,3}
}

\author{
Jan Goedgebeur \thanks{Department of Computer Science, KU Leuven Campus Kulak-Kortrijk, Kortrijk, Belgium.} 
 \thanks{Department of Mathematics, Computer Science and Statistics, Ghent University, Ghent, Belgium.}
\and Jorik Jooken
 \footnotemark[1]
\and Tibo Van den Eede
\footnotemark[1]
\thanks{School of Computing, Australian National University, Canberra, Australia.}
\and Carol T. Zamfirescu
 \thanks{Centre for Research in Mathematics and Data Science, Western Sydney University, Australia.\\ Email addresses:
 \protect\href{mailto:jan.goedgebeur@kuleuven.be}{\protect\nolinkurl{jan.goedgebeur@kuleuven.be}},
\protect\href{mailto:jorik.jooken@kuleuven.be}{\protect\nolinkurl{jorik.jooken@kuleuven.be}},
\protect\href{mailto:tibo.vandeneede@kuleuven.be}{\protect\nolinkurl{tibo.vandeneede@kuleuven.be}} and
\protect\href{mailto:czamfirescu@gmail.com}{\protect\nolinkurl{czamfirescu@gmail.com}}
}
\footnotemark[2]
}

\maketitle

\begin{abstract}
    We investigate the minimum non-zero number of perfect matchings in planar graphs. We prove that this is a constant for 2-connected planar graphs of minimum degree 3 and 3-connected planar graphs. In the former case, the constant is 4 and this is best possible. In the 3-connected case, we give several infinite families, including nearly 3-regular graphs and triangulations with a constant number of perfect matchings. In contrast, it was known that in the 4-connected case the minimum non-zero number of perfect matchings is at least linear. For 5-connected triangulations, it follows from a result of Alahmadi, Aldred, and Thomassen that there must be exponentially many perfect matchings. The families of planar graphs we investigate here are classified by connectivity. We conclude the article with an infinite family of counterexamples to a conjecture published by Zaks; these are not planar, but very much concern connectivity constraints.
\end{abstract}

\keywords{Perfect matching, planar graph, connectivity}

\vspace*{1pt}

\msc{05C70, 05C10, 05C40, 05C35}

\section{Introduction}

Chudnovsky and Seymour~\cite{CS12} proved that planar cubic bridgeless graphs have at least an exponential number of perfect matchings; this was generalised by Esperet, Kardo{\v{s}}, King, Kr{\'a}l' and Norine~\cite{Esperet2011} to not necessarily planar graphs. Motivated by the former paper, we here drop the regularity condition and investigate how the number of perfect matchings behaves in planar graphs. Two surprising observations motivated the present work. 
Firstly, there exist infinitely many planar 2-connected $n$-vertex minimum degree~3 graphs with $3n/2+1$ edges and a positive constant number of perfect matchings. In this paper we present such an infinite family and several related infinite families of planar graphs with a positive constant number of perfect matchings. Secondly, while Zaks~\cite{Z71} described $k$-connected graphs with $k!$ perfect matchings, we shall see that in planar graphs this behaviour is drastically different; in particular, there is a stark shift between the 3-connected case and the 4-connected case.

In \cref{sec:2and3connPl} we discuss planar graphs that are 2- or 3-connected. We present infinite families of such graphs with a constant number of perfect matchings; for the 2-connected case, our construction is optimal. 

In \cref{sec:4and5connPl} we treat planar 4- and 5-connected  graphs. It is known that these have at least a linear number of perfect matchings, but there exist planar 4-connected graphs with a quadratic number of perfect matchings. 

This section is mostly a survey of consequences of existing results. The families of planar graphs we present in the aforementioned results are classified by connectivity. In \cref{sec:conjZaks} we describe a structurally simple infinite family of counterexamples to a conjecture published by Zaks~\cite{Z71}, but perhaps not due to him but Gr\"unbaum (Zaks himself does not make this clear in~\cite{Z71}); these counterexamples are not planar, but do very much concern connectedness. 
We conclude with \cref{sec:notes} in which we make some comments on the work presented here, and present open problems and a conjecture.

Furthermore, the graphs appearing in figures in this paper are made available on both \textit{the House of Graphs~\cite{HoG}} and the GitHub repository \href{https://github.com/AGT-Kulak/countpm}{https://github.com/AGT-Kulak/countpm}. These graphs can be found on the House of Graphs by searching with the keyword \texttt{perfmatchplanar}. In addition, the GitHub repository contains code to count the number of perfect matchings of graphs, which can be used to verify the perfect matchings counts in this paper and might be of use to other researchers studying the number of perfect matchings in graphs.

\subsection{Preliminaries and definitions}

We now introduce our notation and give a useful auxiliary result. All graphs considered in this paper are simple. For a graph $G$, let $\pm(G)$ denote the number of perfect matchings in $G$.  
    
    We will call a graph \textit{matchable} if it contains a perfect matching, and a family of graphs is \textit{matchable} if each of its members is matchable.

A planar graph is a \textit{triangulation} if it has at least four vertices and the addition of any edge renders it non-planar. 
 We note that a triangulation here thus is 3-connected. A graph is \textit{outerplanar} if it has a planar embedding in which all vertices lie on the outer face.

For a given graph $G$, like Zaks~\cite{Z71} we let $F(G)$ denote the subgraph of $G$, which is the union of all of the 1-factors of $G$ (note that Mader~\cite{M76} writes $F(G)$ for $\pm(G)$). 

When $G$ is connected, has at least two vertices, and $F(G) = G$, the graph $G$ is called \textit{matching covered}, since all of its edges are contained in a perfect matching.
An edge or subset of edges in $E(F(G))$ or $E(G) \setminus E(F(G))$ is called \textit{admissible} or \textit{inadmissible}, respectively. 
Given a graph $G$, let $D \subseteq V(G)$, and let $G'$ be the graph obtained from $G$ by adding every edge in $E(G[D]^c)$ to $G$. We say $D$ is \textit{inadmissible} in $G$ if every edge of $G'[D]$ is inadmissible in $G'$.

Given two graphs $G$ and $H$, their \textit{Cartesian product} $G \square H$ is the graph with vertex set $V(G) \times V(H)$ where two vertices $(v,w)$ and $(v',w')$ are adjacent if either $v=v'$ and $ww'\in E(H)$, or $w=w'$ and $vv'\in E(G)$. We write $n \equiv_k a$ for $n = a$ modulo $k$.

The following is a key lemma.

\begin{lemma}[Consequence of Corollary 5.13 in \cite{EPL82}]\label{lem:matchspacedim}

Let $G$ be a matchable graph on at least four vertices. Then
$$\pm(G) \ge \frac{|E(F(G))|-|V(G)|}{2}+2.$$

\end{lemma}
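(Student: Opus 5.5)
The plan is to pass from $G$ to $F(G)$, split $F(G)$ into its connected components, and apply the Edmonds--Lov\'asz--Pulleyblank dimension formula to each component that is not a single edge. Every perfect matching of $G$ lies in $F(G)$, and $F(G)$ has the same perfect matchings as $G$. So $\pm(G)=\pm(F(G))$, and we may replace $G$ by $H=F(G)$, a spanning subgraph with $|E(H)|=|E(F(G))|$.

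Let $H_1,\dots,H_k$ be the components of $H$ and write $m_i=|E(H_i)|$, $n_i=|V(H_i)|$. Each $H_i$ is either a single edge $K_2$ or a matching covered graph on at least four vertices. Since $\pm(H)=\prod_i \pm(H_i)$, the single-edge components contribute a factor $1$ to $\pm(H)$. On the right-hand side, each contributes $m_i-n_i=-1$, so they only decrease the bound. For a matching covered component I would invoke Corollary~5.13 of \cite{EPL82}. It says that the perfect matchings of $H_i$ span a linear space (the span of their incidence vectors) of dimension $m_i-n_i+2-b_i$, where $b_i$ is the number of bricks in a tight cut decomposition of $H_i$. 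Hence $\pm(H_i)\ge m_i-n_i+2-b_i$.

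The main obstacle is the inequality $b_i\le (m_i-n_i)/2$, which turns this into $\pm(H_i)\ge (m_i-n_i)/2+2$. The first thing to try is induction along the tight cut decomposition. A brick is $3$-connected with $n\ge 4$, so it has $m\ge 3n/2$ and therefore $m-n\ge n/2\ge 2$; this gives the base case. For a tight cut $C$ splitting $H_i$ into the two $C$-contractions $G_1,G_2$, one has $n_1+n_2=n_i+2$ and $m_1+m_2=m_i+|C|$. Since $|C|\ge 3$ for a nontrivial tight cut, $(m_1-n_1)+(m_2-n_2)=m_i-n_i+|C|-2$. This overshoots by $|C|-2$, so the naive additivity goes the wrong way. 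I expect the induction must be organised around the quantity $m-n+2-b$ itself, which is additive under tight cut contraction in the known way, together with a lower bound on its value for each brick and brace. If that fails, a fallback is to bound $\pm(H_i)$ directly via an ear decomposition, where each ear contributes a new perfect matching and there are about $(m_i-n_i)/2$ ears, each adding at least one net edge.

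Finally the components are combined. Put $a_i=(m_i-n_i)/2\ge 0$ for the nontrivial components, so $\pm(H_i)\ge a_i+2$. Then $\prod(a_i+2)\ge \sum a_i+2$ for nonnegative $a_i$, whenever there is at least one nontrivial component. The $K_2$ components only lower $(|E(H)|-|V(G)|)/2+2$, so the claimed bound follows. If every component is $K_2$, then $H$ is a perfect matching and the right-hand side equals $2-|V(G)|/4\le 1=\pm(G)$, using $|V(G)|\ge 4$.
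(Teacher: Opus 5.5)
The reduction to $F(G)$, the split into components, the inequality $\prod_i(a_i+2)\ge\sum_i a_i+2$, and the all-$K_2$ case are all fine. The gap is the one you flag yourself. Everything rests on $\Phi(H_i)\ge \frac{m_i-n_i}{2}+2$ for each matching covered component on at least four vertices. Via the rank formula you reduce this to $b_i\le \frac{m_i-n_i}{2}$, and you never prove it.

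Your first route cannot close this gap. With $r:=m-n+2-b$ one has $r(G)=r(G_1)+r(G_2)-|C|$ for a tight cut $C$. So inductive bounds $r(G_j)\ge\frac{m_j-n_j}{2}+2$ only give $r(G)\ge \frac{m-n}{2}+3-\frac{|C|}{2}$, which is short by $\frac{|C|-2}{2}$ at every cut. No lower bound on individual bricks and braces repairs this, because $b\le\frac{m-n}{2}$ is a global statement about how tight cuts arise (barriers, 2-separations). Incidentally, $|C|\ge 3$ is false in general: in $C_6$ the cut separating two vertex-disjoint paths on three vertices is a nontrivial tight cut of size $2$.

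Your fallback via ear decompositions is the right idea, but the count as you state it is wrong, and the ingredients that produce the constant $2$ are missing. There are $m_i-n_i+1$ ears, since each ear raises $m-n$ by exactly one, not about $\frac{m_i-n_i}{2}$. What is roughly half of that is the number of \emph{steps}.

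The needed input is the two-ear theorem of Lov\'asz and Plummer. A matching covered graph on at least four vertices has an ear decomposition starting from an even cycle (where $\Phi=2$ and $m-n=0$). Each step adds one or two vertex-disjoint odd ears, and every intermediate graph is matching covered.

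Each step raises $\Phi$ by at least one. Every old perfect matching extends uniquely, and injectively, by the ear edges in even positions. None of these extensions contains an end edge of a new ear, and matching-coveredness of the new graph supplies a perfect matching that does. Since each step raises $m-n$ by at most $2$, there are at least $\frac{m_i-n_i}{2}$ steps after the initial cycle, so $\Phi(H_i)\ge 2+\frac{m_i-n_i}{2}$.

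The even-cycle start matters. Counting from $K_2$ and halving the $m_i-n_i+1$ ears gives only $\frac{m_i-n_i+3}{2}$, which is too weak when $m_i-n_i$ is odd. This argument bypasses the rank formula and the brick count entirely, and your combination step then finishes the proof.
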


\section{A constant number of perfect matchings:\\ The 2- and 3-connected cases}\label{sec:2and3connPl}

\subsection{The 2-connected case}

\begin{theorem}
For every even $n\ge 6$ we have a planar $2$-connected $n$-vertex graph $G$ with exactly four perfect matchings, minimum degree~$3$, and size $\frac{3n}{2}+1$. The constant $4$ is best possible.
\end{theorem}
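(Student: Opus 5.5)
The statement has two parts: an explicit infinite family, and a matching lower bound of $4$ valid for \emph{every} planar $2$-connected matchable graph of minimum degree~$3$. I would treat them separately.

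\emph{Construction.} I would build the family inductively, adding two vertices at each step, so that every even $n\ge 6$ is covered.

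1. Start from a small base graph $G_6$ on six vertices with $10$ edges and minimum degree~$3$, and check by hand that $\pm(G_6)=4$. The triangular prism is the natural first candidate, but it has only $9=3n/2$ edges, and adding a diagonal of a quadrilateral face already raises the count to $5$. So the base graph must be chosen more carefully, for instance a non-$3$-connected graph built from two $K_4^-$'s joined along a $2$-cut.

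2. Define an operation $G_n\mapsto G_{n+2}$ that inserts a gadget across a $2$-cut $\{x,y\}$. The gadget adds two new vertices $u,v$ and exactly three edges, so the edge count stays $3n/2+1$. It is chosen so that, in every perfect matching, the new vertices are matched in a forced way: say $uv$ must be used, or the pair $(u,v)$ is matched to $(x,y)$ in a way that is equivalent to an edge $xy$ already present. Planarity is maintained because the gadget sits inside a face, and $2$-connectivity and minimum degree~$3$ are preserved by construction.

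3. Prove $\pm(G_{n+2})=\pm(G_n)=4$ by conditioning on how the cut vertices $x,y$ are matched (inside or outside the gadget). The parity of the two sides of the $2$-cut forces a bijection between the perfect matchings of $G_{n+2}$ and those of $G_n$.

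\emph{Lower bound.} Let $G$ be matchable, planar, $2$-connected, with $\delta(G)\ge 3$, and suppose $\pm(G)\le 3$; I aim for a contradiction.

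1. By Kotzig's theorem, a graph with a unique perfect matching has a bridge lying in that matching. Since $G$ is $2$-connected, this gives $\pm(G)\ge 2$.

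2. Apply \cref{lem:matchspacedim} to get $|E(F(G))|\le |V(G)|+2\pm(G)-4\le |V(G)|+2$. Hence $F(G)$ is very sparse: its non-trivial elementary components are even cycles, plus at most one extra chord or ear. All remaining vertices are covered by edges that lie in every perfect matching.

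3. Use $\delta(G)\ge 3$: each vertex is incident to at least one inadmissible edge, so there are many inadmissible edges. By the Dulmage--Mendelsohn / Lovász--Plummer structure theory, these edges are exactly the ones joining distinct elementary components in a way incompatible with an ear-type ordering.

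4. Shrink each elementary component of $F(G)$ to a vertex, and show the resulting ordering forces a vertex of degree at most $2$, or a cut vertex of $G$, at an extremal component. This is the same extremal argument as in the proof of Kotzig's theorem.

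\emph{Main obstacle.} I expect step~4 to be the hardest part: converting "few admissible edges" into a cut vertex or a low-degree vertex. My first attempt would be to take an extremal elementary component $C$ in the canonical partial order of elementary components. Every inadmissible edge leaving $C$ goes "downward", so the vertices of $C$ that are not on its alternating cycle have all their neighbours inside $C$ or along a single attachment. Combining this with $2$-connectivity should force a vertex of degree at most $2$.
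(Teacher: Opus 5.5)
Your proposal has a genuine gap, and it lies in the part that carries the real content: the claim that $4$ is best possible.

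The statement you try to prove by contradiction is false as formulated. $K_4$ is planar, $2$-connected, matchable, has $\delta=3$, and $\pm(K_4)=3$. It meets your Step~2 with equality ($|E(F(K_4))|=6=|V(K_4)|+2$), has no inadmissible edges, and is a single elementary component. So no extremal argument like Step~4 can produce a vertex of degree at most $2$ or a cut vertex, and a correct argument must use $n\ge 6$, which yours never does. The matching covered case is easy: there $|E(F(G))|=|E(G)|\ge 3n/2$, and \cref{lem:matchspacedim} gives $\pm(G)\ge n/4+2>3$ for $n\ge 6$. The non-matching-covered case is exactly your Step~4, and there you offer only an expectation. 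Step~3's claim that every vertex meets an inadmissible edge does not follow from the counting, which only yields $|E(G)\setminus E(F(G))|\ge n/2-2$. No concrete ordering of elementary components or extremal argument is actually given.

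What is missing is precisely Mader's theorem~\cite{M76}: every matchable $2$-connected graph with minimum degree at least $3$, other than $K_4$, has at least four perfect matchings. The paper simply cites it, and planarity plays no role. Without this theorem or a complete proof of it, sharpness is unproved.

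The construction half is also not pinned down, and as written it cannot work. Two new vertices of degree at least $3$ are incident with at least five new edges. So a gadget adding ``exactly three edges'' must delete old ones, for instance by subdividing two edges and joining the subdivision vertices, i.e.\ inserting a rung into a ladder. Such an insertion need not preserve $\pm$. Take a ladder $P_k\square K_2$ with a $K_4^-$ cap at each end, both attached as in the middle drawing of \cref{fig:2conn4PMs}. It has $4$ perfect matchings for odd $k$ but $5$ for even $k$, because the forced edges zigzag along the ladder and their parity interacts with the caps. This is why the paper gives explicit families with differently oriented right-hand caps for $n\equiv_4 0$ and $n\equiv_4 2$. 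Your claim that ``parity forces a bijection'' therefore needs an explicit gadget and an explicit count.

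Your base graph, two copies of $K_4^-$ glued at their degree-$2$ vertices, is correct and coincides with the paper's $n=6$ graph.
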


\begin{proof}
See \cref{fig:2conn4PMs}. It is elementary to verify that all of the theorem's statements hold, except for the last one. But this follows directly from Mader's result stating that every matchable 2-connected graph with minimum degree at least 3 except $K_4$ has at least four perfect matchings~\cite{M76}.

\begin{figure}[htb]
    \centering

\begin{tikzpicture}[scale=0.5,main_node/.style={circle,draw,minimum size=1em,inner sep=3pt]}]

\node[main_node] (llm) at (-2, 0) {};
\node[main_node] (lm) at (-1, 0) {};
\node[main_node] (mu) at (0, 2) {};
\node[main_node] (md) at (0, -2) {};
\node[main_node] (rm) at (1, 0) {};
\node[main_node] (rrm) at (2, 0) {};

 \path[draw, thick]
(llm) edge node {} (lm)
(llm) edge node {} (mu)
(llm) edge node {} (md)
(lm) edge node {} (mu)
(lm) edge node {} (md)
(rrm) edge node {} (rm)
(rrm) edge node {} (mu)
(rrm) edge node {} (md)
(rm) edge node {} (mu)
(rm) edge node {} (md)
;

\end{tikzpicture}
\hspace{0.5cm}
\begin{tikzpicture}[scale=0.5,main_node/.style={circle,draw,minimum size=1em,inner sep=3pt]}]

\node[main_node] (llm) at (-2, 0) {};
\node[main_node] (lm) at (-1, 0) {};
\node[main_node] (lu) at (-1, 1) {};
\node[main_node] (lau1) at (0, 2) {};
\node[main_node] (lad1) at (0, -2) {};
\node[main_node] (lau2) at (1, 2) {};
\node[main_node] (lad2) at (1, -2) {};

\node[main_node] (2lau1) at (4, 2) {};
\node[main_node] (2lad1) at (4, -2) {};

\node[main_node] (rm) at (5, 0) {};
\node[main_node] (rrm) at (6, 0) {};
\node[main_node] (ru) at (5, 1) {};

\node[main_node, draw=none] (rc) at (3.5, 0) {};
\node[main_node, draw=none] (lc) at (1.5, 0) {};

\path[draw, thick]
(llm) edge node {} (lm)
(llm) edge node {} (lad1)
(llm) edge node {} (lu)
(lm) edge node {} (lad1)
(lm) edge node {} (lu)
(lu) edge node {} (lau1)
(lad1) edge node {} (lau1)
(lad1) edge node {} (lad2)
(lau1) edge node {} (lau2)
(lad2) edge node {} (lau2)

(rrm) edge node {} (rm)
(rrm) edge node {} (ru)
(rrm) edge node {} (2lad1)
(rm) edge node {} (ru)
(rm) edge node {} (2lad1)
(ru) edge node {} (2lau1)
(2lad1) edge node {} (2lau1)

;

\path[draw, thick, dotted]
(lc) edge node {} (rc)
;

\end{tikzpicture}
\hspace{0.5cm}
\begin{tikzpicture}[scale=0.5,main_node/.style={circle,draw,minimum size=1em,inner sep=3pt]}]

\node[main_node] (llm) at (-2, 0) {};
\node[main_node] (lm) at (-1, 0) {};
\node[main_node] (lu) at (-1, 1) {};
\node[main_node] (lau1) at (0, 2) {};
\node[main_node] (lad1) at (0, -2) {};
\node[main_node] (lau2) at (1, 2) {};
\node[main_node] (lad2) at (1, -2) {};

\node[main_node] (2lau1) at (4, 2) {};
\node[main_node] (2lad1) at (4, -2) {};

\node[main_node] (rm) at (5, 0) {};
\node[main_node] (rrm) at (6, 0) {};
\node[main_node] (rd) at (5, -1) {};

\node[main_node, draw=none] (rc) at (3.5, 0) {};
\node[main_node, draw=none] (lc) at (1.5, 0) {};

\path[draw, thick]
(llm) edge node {} (lm)
(llm) edge node {} (lad1)
(llm) edge node {} (lu)
(lm) edge node {} (lad1)
(lm) edge node {} (lu)
(lu) edge node {} (lau1)
(lad1) edge node {} (lau1)
(lad1) edge node {} (lad2)
(lau1) edge node {} (lau2)
(lad2) edge node {} (lau2)

(rrm) edge node {} (rm)
(rrm) edge node {} (rd)
(rrm) edge node {} (2lau1)
(rm) edge node {} (rd)
(rm) edge node {} (2lau1)
(rd) edge node {} (2lad1)
(2lad1) edge node {} (2lau1)

;

\path[draw, thick, dotted]
(lc) edge node {} (rc)
;

\end{tikzpicture}

\caption{A planar 2-connected graph with four perfect matchings, minimum degree 3, size $\frac{3n}{2}+1$, and order $n=6$ (left), order $n\ge8, n \equiv_4 0$ (middle), and order $n\ge10, n \equiv_4 2$ (right).}
    \label{fig:2conn4PMs}
\end{figure}
\end{proof}

This first result is not hard to obtain. But the following points motivate its inclusion. (1)~It was surprising to us that there exist infinitely many planar 2-connected $n$-vertex graphs with minimum degree~3 and size $\frac{3n}{2}+1$ with a small positive constant number of perfect matchings. (2) Not only is our theorem best possible by Mader's result~\cite{M76}, he only provided one  graph proving the sharpness of his inequality as an example; we give infinitely many. (3) Our result also points to the fact that results of Mader have been misquoted in Theorem~1.6.6 on \cite[p.~33]{YL09}. 
Their ``theorem'' states that a matchable 2-connected non-bicritical\footnote{A graph $G$ is \textit{bicritical} if for every distinct $v,w \in V(G)$ the graph $G - v - w$ is matchable.} graph $G$ with $\delta(G)\ge k$ has at least $k!$ perfect matchings. This is not accurate for $k = 3$: our family of graphs, for $n \ge 8$, is not bicritical and neither does it satisfy $\pm(G) \ge \delta(G)!$. This omission does not seem to appear in the published Errata. We note that there are counterexamples for every even $k \ge 4$: Take the disjoint union of two copies of $K_{k+1}$. Take an edge $e_1=v_1w_1$ from the first copy and an edge $e_2=v_2w_2$ from the second copy. Remove $e_1, e_2$ and add the edges $v_1v_2$ and $w_1w_2$. The resulting graph has $2((k-1)!!)^2<k!$ perfect matchings.

\subsection{The 3-connected case}

On the one hand, it is well-known that planar 3-connected graphs of even order may have no perfect matching, see \cite{P92}. On the other hand, if they are 3-regular they have an exponential number of perfect matchings. Furthermore, existing results imply that matchable\footnote{Every even order planar 4-connected graph is matchable.} planar $4$-connected graphs have at least linearly many perfect matchings, see the next section. So initially, to us, the asymptotic behaviour of the number of perfect matchings in matchable planar 3-connected graphs was unclear.

One approach was to investigate whether planar 3-connected graphs are matching covered, as by Lemma~\ref{lem:matchspacedim} every matching covered $3$-connected graph has at least a linear number of perfect matchings. But it is not difficult to see that this need not be so, in a strong sense, as we now show. We first need a preparatory lemma.

\begin{lemma}\label{lem:inadmin}
Let $G$ be a matchable bipartite graph with bipartition $(A,B)$. For $X \in \{ E((G[A])^c),\allowbreak E((G[B])^c) \}$ and any $Y \subseteq X$, we have that $Y \cap E(F(G + Y))$ is empty.
\end{lemma}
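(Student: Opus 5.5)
The plan is a counting argument on one side of the bipartition. Take $X = E((G[A])^c)$; the case $X = E((G[B])^c)$ is symmetric. Since $G$ is bipartite, $G[A]$ has no edges, so $X$ consists of all pairs of vertices of $A$. Fix $Y \subseteq X$ and put $H = G + Y$. The claim is that no perfect matching of $H$ contains an edge of $Y$. Equivalently, every perfect matching $M$ of $H$ satisfies $M \cap Y = \emptyset$, and hence $Y \cap E(F(H)) = \emptyset$.

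First we record that $|A| = |B|$. This holds because $G$ is bipartite and matchable: a perfect matching of $G$ pairs every vertex of $A$ with a vertex of $B$ bijectively.

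Now let $M$ be any perfect matching of $H$. Every edge of $H$ is one of three types: an edge of $G$, joining $A$ to $B$; an edge of $Y$, joining two vertices of $A$; or an edge inside $B$. Since $Y \subseteq X$ adds only edges inside $A$, $H$ has no edges inside $B$. So every vertex of $B$ is matched by $M$ to a vertex of $A$, via an edge of $G$. These $|B|$ edges are pairwise disjoint, so they cover exactly $|B| = |A|$ vertices of $A$, that is, all of $A$. Thus no vertex of $A$ remains to be covered by an edge of $Y$, and $M \cap Y = \emptyset$. As $M$ was arbitrary, no edge of $Y$ lies in $F(H)$, which proves the statement.

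There is essentially no obstacle here. The only point needing care is noting that matchability of $G$ forces $|A| = |B|$, and that adding $Y$ creates no edges inside the opposite class $B$. The symmetric case is identical with the roles of $A$ and $B$ swapped. If $H$ has no perfect matching at all, then $F(H)$ is empty and the statement is trivial, so the argument covers that case as well.
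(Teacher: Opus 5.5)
Your proof is correct and is essentially the paper's argument: both rest on $|A|=|B|$ and on $H$ having no edges inside $B$, so that the $|B|$ matching edges covering $B$ already use up all of $A$. The paper phrases this as a contradiction (an edge of $Y$ in $M$ would force $|M|>|V(G)|/2$), whereas you argue directly, which is only a cosmetic difference.
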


\begin{proof}
    Assume there is an edge $e = vw \in Y$ in a perfect matching $M$ of $G + Y$. Without loss of generality, $v,w \in A$. We have $|A| = |B|$. For each $b \in B$ we have an edge $e_b \in M$ such that $b$ is incident with $e_b$; moreover, $|M| \ge |B|$. Since $e$ is also in $M$ but not incident with any $b \in B$, we have $|M| > \frac{|V(G)|}{2}$, a contradiction.
\end{proof}

\begin{proposition}\label{prop:nonCoveredSubgraph}
    For every outerplanar graph $H$, there exists a planar $3$-connected graph $G$ such that {\normalfont (i)} $\pm(G) \ge 2^{|V(G)|/4}$; {\normalfont (ii)} $H$ is an induced subgraph of $G$; and {\normalfont (iii)} no edge of $H$, now seen as a subgraph of $G$, is contained in a perfect matching of $G$.
\end{proposition}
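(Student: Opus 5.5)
We use Lemma~\ref{lem:inadmin}: embed $H$ into one colour class of a planar bipartite graph $B_0$, obtain $G$ as $B_0$ plus the edges of $H$, and arrange that $G$ is 3-connected and has exponentially many perfect matchings. Fix an outerplanar embedding of $H$ with all vertices $h_1,\dots,h_m$ in cyclic order on the outer face; we may assume $m\ge 3$, adding isolated vertices to $H$ otherwise (it stays induced once no edges among them are added). Since $H$ is outerplanar, the edges of $H$ can be drawn inside a disk $\Delta$ with the $h_i$ on $\partial\Delta$. Outside $\Delta$ we build a bipartite planar graph $B_0$ with colour classes $A\supseteq V(H)$ and $B$, $|A|=|B|$, with $B_0$ matchable. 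We set $G=B_0+E(H)$. Here $E(H)\subseteq E((B_0[A])^c)$, so Lemma~\ref{lem:inadmin} immediately gives (iii). Moreover (ii) holds because we add no other edges between vertices of $H$.

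For the concrete construction, place vertices $b_1,\dots,b_m\in B$ on $\partial\Delta$ between consecutive $h_i$ and $h_{i+1}$, forming a bipartite cycle $h_1b_1h_2b_2\cdots h_mb_m$ of length $2m$. Surround it by further nested even cycles of alternating colour, joined radially so that every face outside $\Delta$ is a quadrilateral. Concretely, take the cylinder $C_{2m}\square P_k$ with an outer cap, which is bipartite because $2m$ is even. Then add a cubic-like bipartite part to reach 3-connectivity. The classical choice is to make the part outside $\Delta$ a quadrangulation: any even-faced 3-connected-ish structure such as $C_{2m}\square P_k$ closed by a final vertex pair (or a cube-like cap). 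The face inside $\Delta$ is then subdivided by the edges of $H$. A 3-connected planar graph is then verified through Whitney/Tutte-type arguments. Each $h_i$ has degree at least 3 in $B_0$ (two neighbours on the cycle and one radial neighbour), and any 2-cut would have to separate the nested cycle structure. The grid cylinder $C_{2m}\square P_k$ is 3-connected apart from its boundary cycles, and caps plus the chords of $H$ (or the $b_i$ having a radial edge) fix the boundary. If $b_i$ has degree only 3 that is fine; note that $h_i$ and $b_i$ on the inner cycle each have a radial edge, so every vertex has degree at least 3.

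For (i), choose the number $k$ of layers large, or simply note that $C_{2m}\square P_k$ contains the perfect matchings built layer by layer. Each 4-cycle between consecutive pairs of layers gives independent switches, so the count is exponential in the number of vertices. More cleanly, partition $V(G)$ into $|V(G)|/4$ vertex-disjoint 4-cycles (squares of the grid, pairing columns $2j-1,2j$ in rows $2r-1,2r$). This requires $k$ even and $m$ handled by the pairing of $2m$ columns, and possibly adjusting the cap to be a union of disjoint 4-cycles too. Each such square has 2 perfect matchings, giving $\pm(G)\ge 2^{|V(G)|/4}$. The even cycle length $2m$ is exactly what makes the columns pair up.

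The main obstacle is choosing the cap closing the outermost cycle so that the graph stays bipartite, planar and 3-connected while still being partitioned into disjoint 4-cycles. My first attempt is to close with another copy of the same structure mirrored, i.e.\ take a second disk containing a copy of $C_{2m}\square P_k$ with a bipartite 3-connected filling (e.g.\ a prism-like bipartite quadrangulation such as the cube generalisation). Alternatively, connect the outermost cycle $C_{2m}$ to a $2m$-cycle on the outer face with chords forming a bipartite quadrangulation. Checking 3-connectivity there is the delicate but routine part.
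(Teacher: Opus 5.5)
Your strategy matches the paper's. You place $V(H)$ on alternate vertices of an even cycle that bounds a face of a matchable planar bipartite graph, and you draw $H$ inside that face. Then (iii) follows from Lemma~\ref{lem:inadmin}, and (ii) follows because a colour class is independent. The gap is that you never commit to a specific graph $G$. You treat the cylinder $C_{2m}\square P_k$ as needing a cap to become 3-connected, call the cap the ``main obstacle'', and leave it open. As written, therefore, neither 3-connectivity nor (i) is actually proved.

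Your tiling argument gives (i) only if the cap vertices are also covered by vertex-disjoint 4-cycles. For example, a closing vertex pair adds two vertices but no factor~$2$, so the tiling then yields only $2^{(|V(G)|-2)/4}$. Your remark that the cylinder is ``3-connected apart from its boundary cycles'' is also off: for $m\ge 2$ and $k\ge 2$ it is 3-connected outright.

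The missing observation is that no cap is needed at all. Already for $k=2$, the prism $C_{2m}\square K_2$ is planar, bipartite, cubic and 3-connected. If two vertices are deleted from the same $2m$-cycle, the other cycle stays intact and every surviving vertex is attached to it by its rung. If one vertex is deleted from each cycle, two paths remain, joined by at least $2m-2\ge 2$ surviving rungs. The second $2m$-cycle simply bounds a face.

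For (i), the $m$ squares spanned by rungs $2j-1$ and $2j$ partition the vertex set, so $\pm(G)\ge 2^m=2^{|V(G)|/4}$. Adding $E(H)$ inside the face bounded by $h_1b_1\cdots h_mb_m$ keeps the graph planar and 3-connected, and Lemma~\ref{lem:inadmin} makes every edge of $H$ inadmissible. This is exactly the paper's construction with $m=|V(H)|$.

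Your padding of $H$ by isolated vertices is a small improvement over the paper. The paper does not address $|V(H)|=1$, where $C_2\square K_2$ is not a simple 3-connected graph.
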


\begin{proof}
Put $k := |V(H)|$. Consider $C_{2k} \Box K_2 =: G$, a cubic planar 3-connected graph of order~$n = 4k$. 
We have $\pm(G) \ge 2^{n/4}$. We embed this graph such that we have an  exterior $2k$-gon $P$, forming the boundary cycle of the infinite face. In $P$, a bipartite graph, we consider every second vertex, and add edges until $H$ is obtained. As $G$ is 3-connected, this graph must also be 3-connected; as $G$ is planar and $H$ is outerplanar, this graph must also be planar. By~\cref{lem:inadmin}, every edge of $H$ is inadmissible.
\end{proof}

\newcommand{\base}{H}
\newcommand{\baseDegFive}{I}
\newcommand{\baseTriang}{J}
\newcommand{\baseMinDegFour}{Q}
\newcommand{\gadgetMinDegFour}{R}

In the sequel we will use the following fact, sometimes implicitly.

\bigskip

\noindent \textbf{Fact.} \textit{Let $G$ be a matchable graph and let $C_1, \dots, C_k$ be the connected components of $F(G)$. Then 
$$\pm(G) = \prod_{i=1}^k \pm(C_i).$$}

\begin{lemma}\label{lem:glue}
Suppose $L$ is a matchable graph and $H$ is a matchable graph with an inadmissible set $D \subseteq V(H)$. Take the disjoint union of $L$ and $H$. Add an edge set $S$ between $D$ and $V(L)$.

Call the resulting graph $G$. Then $S$ is inadmissible and $\pm(G) = \pm(H) \cdot \pm(L).$
\end{lemma}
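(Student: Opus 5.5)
Let $H'$ be the graph obtained from $H$ by adding every edge of $E(H[D]^c)$, so that $H'[D]$ is complete on $D$ and, by the definition of an inadmissible set, no edge of $H'[D]$ lies in a perfect matching of $H'$. The plan is to show that no perfect matching of $G$ contains an edge of $S$. Once this holds, every perfect matching of $G$ lies in $G-S$, which is the disjoint union of $L$ and $H$. Hence the perfect matchings of $G$ are exactly the pairs $(M_L,M_H)$ with $M_L$ a perfect matching of $L$ and $M_H$ a perfect matching of $H$. This gives $\pm(G)=\pm(H)\cdot\pm(L)$, and both factors are nonzero because $L$ and $H$ are matchable.

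\textbf{Main step (inadmissibility of $S$).} Suppose for contradiction that $M$ is a perfect matching of $G$ with $M\cap S\neq\emptyset$. Let $D_1\subseteq D$ be the vertices of $D$ matched by $M$ into $V(L)$ via edges of $S$, and put $t:=|D_1|\ge 1$.

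\begin{itemize}
\item The vertices of $V(L)$ matched into $D_1$ form a set $L_1$ with $|L_1|=t$. Every other vertex of $L$ is matched inside $L$, since the only edges leaving $V(L)$ are those of $S$. Thus $L-L_1$ has a perfect matching, and in particular $|V(L)|-t$ is even. As $|V(L)|$ is even ($L$ is matchable), $t$ is even.
\item Similarly, $M$ restricted to $H-D_1$ is a perfect matching of $H-D_1$.
\item Pair up the vertices of $D_1$ arbitrarily into $t/2$ pairs, which is possible because $t$ is even. Each pair is an edge of $H'[D]$, since $H'[D]$ is complete.
\item Adding these $t/2$ edges to the perfect matching of $H-D_1$ gives a perfect matching of $H'$ that contains an edge of $H'[D]$ (as $t\ge 2$). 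This contradicts the inadmissibility of $D$.
\end{itemize}

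Therefore $S$ is inadmissible.

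The only delicate point is the parity argument that $t$ is even, which is what allows the edges of $S$ used by $M$ to be replaced by edges inside $D$. This uses only that $|V(L)|$ is even and that $S$ is the sole set of edges between $V(L)$ and $V(H)$. Everything else is bookkeeping, and the product formula could alternatively be read off from the Fact, because $F(G)$ is contained in $L\cup H$.
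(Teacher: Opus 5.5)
Your proof is correct and follows the paper's argument: a perfect matching meeting $S$ uses an even number of $S$-edges, and pairing up their endpoints in $D$ yields a perfect matching of $H'$ with an edge inside $D$, contradicting the inadmissibility of $D$. The differences are minor. You complete all of $D$, which matches the definition exactly, whereas the paper completes only the endpoint set $X$. You also obtain $\pm(G)=\pm(H)\cdot\pm(L)$ from the direct bijection on $G-S$ rather than by citing the Fact.
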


\begin{proof}
Since $L$ and $H$ are matchable, so is $G$. Let $M$ be a perfect matching in $G$. Let $T := M \cap S$. We have $|T|$ even as $H$ and $L$ have even order. Suppose $|T| > 0$. Let $X\subseteq D$ 
be the set of vertices in $H$ incident with an edge in $T$. Then $H - X$ has a perfect matching $N$. Let $H'$ be the graph obtained from $H$ by adding every missing edge between distinct vertices of $X$, so that the vertices in $X$ induce a clique in $H$. Then $N \cup Q$, with $Q$ a perfect matching of $H'[X]$, 
is a perfect matching of $H'$, contradicting the assumption that $D$ is inadmissible. The very last statement is due to the Fact stated just before the lemma.
\end{proof}

We now state and prove our main result. Its parts (i) and (ii) address the question how `close' to a cubic bridgeless graph a matchable planar 3-connected graph can be while having as few perfect matchings as possible, mirroring the 2-connected case we discussed earlier. For (iii) and (v) we treat the same problem  for maximally planar graphs. And in (iv) we discuss the bipartite case.

\begin{theorem}\label{prop:constantNumPMsClasses}
    Let $n_0$ and $c$ be integers. For every even $n\ge n_0$ there exists a planar $3$-connected $n$-vertex graph $G$ with $c$ perfect matchings, for the following combinations of $n_0$, $c$ and further restrictions on $G$:
    \begin{enumerate}[label=\textnormal{(\roman*)}]
        \item $n_0= 10$, $c=12$, $G$ has four vertices of degree $4$ and $n-4$ vertices of degree $3$; 
        \item $n_0= 14$, $c=18$, $G$ has two vertices of degree $5$ and $n-2$ vertices of degree $3$; 
        \item $n_0= 10$, $c=12$, and $G$ is a triangulation; 
        \item $n_0= 22$, $c=144$, $G$ is bipartite, has eight vertices of degree $4$ and $n-8$ vertices of degree $3$; 
        
        \item $n_0= 24$, $c=768$, $G$ has minimum degree $4$ and $G$ is a triangulation.
    \end{enumerate}

\end{theorem}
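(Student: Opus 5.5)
The plan is to treat all five parts by one scheme, driven by \cref{lem:glue} and the Fact. For each part I would fix a small \emph{base} graph (using the macros already reserved: $\base$ for (i), $\baseDegFive$ for (ii), $\baseTriang$ for (iii), a bipartite base for (iv), and $\baseMinDegFour,\gadgetMinDegFour$ for (v)) together with a \emph{growable} part $L$ having a fixed number of perfect matchings for every admissible even order. The base is chosen to contain an inadmissible set $D$. I would certify inadmissibility via \cref{lem:inadmin}: if the relevant region is bipartite with $D$ on one side, then any clique added on $D$ cannot enter a perfect matching. Gluing with edges between $D$ and $V(L)$ gives $\pm(G)=\pm(\base)\cdot\pm(L)$. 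The constants factor naturally in this way: $12=3\cdot4$, $18=3\cdot 6$ (or $2\cdot 9$), $144=12^2$, and $768=3\cdot 256$ or similar. I would aim to arrange each $c$ as a product of two or three small counts of rigid pieces.

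Next I need $L$ to be arbitrarily long while keeping $\pm(L)$ constant. For this I would use a ladder-like strip whose rungs are all forced. In $C_{2k}\square K_2$ the count grows, so instead I would use a chain of blocks in which consecutive blocks are joined through sets that are inadmissible by \cref{lem:glue}. Applying \cref{lem:glue} repeatedly, only the two end blocks contribute perfect matchings and every middle block has exactly one. In practice I would take the middle blocks to be pairs of vertices joined by an edge that is forced. All connecting edges then lie in $E(G)\setminus E(F(G))$, and the Fact gives the product formula.

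For each part I would then verify the extra requirements, namely the degree sequence, triangulation, minimum degree~$4$, or bipartiteness. For (iv) the inadmissibility argument via \cref{lem:inadmin} has to be replaced by a parity or bipartition-imbalance argument, because every edge runs between the two colour classes. Here I would use a Hall-type count: if a subset $X$ of one colour class has $|N(X)|=|X|$, then every edge leaving $N(X)$ toward vertices outside $X$ is inadmissible. Two copies of a 12-matching piece joined through such tight sets give $144$. For the triangulations (iii) and (v), I would glue inside a face and then triangulate the connecting region using only edges within $D$ or between $D$ and $L$, which stay inadmissible by \cref{lem:glue}. The residue classes of $n$ (cf. \cref{fig:2conn4PMs}) would be handled by two slightly different end gadgets, and $n_0$ is the order of the smallest instance.

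The remaining checks are 3-connectivity, which I would verify by Menger-type arguments on the explicit drawings, and the exact counts, which I would confirm by enumerating the small components of $F(G)$ (also verifiable with the repository code). I expect the main obstacle to be (v). Minimum degree $4$ together with triangulation forces dense, highly connected joins, so it is hard to keep the joining edges inadmissible while also growing the graph without new perfect matchings appearing. There I would first try a bipartite-like core, where one colour class induces the clique additions, wrapped by a fixed gadget $\gadgetMinDegFour$ repeated along the chain.
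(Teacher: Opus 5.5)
\textbf{Assessment: genuine gap.} Your architecture is the paper's. You use a bipartite base whose inadmissible set lies in one colour class (certified by \cref{lem:inadmin}), glued by \cref{lem:glue} to a growing piece with a fixed number of perfect matchings. Your chain of forced $K_2$'s is exactly the paper's device for (iii): each new $K_2=\{x,y\}$ goes into the face $t_1t_2x$, and $\{t_1,t_2,x\}$ becomes the next inadmissible set. The problem is that the theorem is purely existential, with exact constants, orders and degree sequences, and your proposal never exhibits a graph. So none of (i)--(v) is established. The missing pieces are the content of the paper's proof:
\begin{enumerate}
\item For (i), (iii), and as the building block of (iv): a planar bipartite $10$-vertex graph $H$ with $\Phi(H)=12$ having three same-coloured vertices $t_1,t_2,t_3$ on a common face.
\item For (ii): a $14$-vertex analogue $I$ with $\Phi(I)=18$.
\item For (i) and (ii): a growing gadget with a forced perfect matching that respects the degree-$3$ constraint. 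Hanging $K_2$'s on an inadmissible triple as in (iii) keeps raising the degrees of the triple's old vertices, so it does not obviously fit. The paper instead uses the ladder $L_{m-2}$ with a pendant vertex $x$ at $z_1$ and a vertex $y$ joined to $z_3,z_4$.
\item For (v), an idea you do not have at all. Replace each of the five blue vertices of the triangulated base $J$ by a triangle and retriangulate. Each replacement doubles the count ($12\cdot 2^5=384$), pushes the degrees up, and keeps $\{t_1,t_2,t_3\}$ inadmissible. Then glue $R_{n-20}$, which has $\Phi(R_{n-20})=2$.
\end{enumerate}

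Your tentative factorizations ($12=3\cdot 4$, $768=3\cdot 256$) also point the wrong way. A 3-connected glue needs $|D|\ge 3$, since otherwise $D$ is a vertex cut. Natural small pieces such as $K_4$ or the prism $K_3\square K_2$ are matching covered with independence number at most $2$, so they have no inadmissible set of size $3$. In the paper, $12$ and $18$ are counts of single bipartite pieces, $144=12^2$, and $768=12\cdot 2^5\cdot 2$.

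For (iv), your claim that \cref{lem:inadmin} must be replaced misreads the definition of an inadmissible set. The clique on $D$ is added only to the test graph $G'$, never to $G$ itself. The paper takes two copies of $H$ with the colour classes interchanged in the second copy. Then all connecting edges ($t_1^{(1)}t_1^{(2)}$, $t_2^{(1)}t_2^{(2)}$, $t_1^{(1)}x^{(1)}$, \dots) join opposite colours, and it applies \cref{lem:inadmin} to each copy and \cref{lem:glue} twice. The parity of the distance between the ends of the connecting ladder is what forces separate constructions for $n\equiv_4 0$ and $n\equiv_4 2$. Your Hall-type criterion is correct: $|N(X)|=|X|$ makes the edges from $N(X)$ to the rest of the other class inadmissible. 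But it does not produce the graph. Likewise, your checks of 3-connectivity, planarity of the triangulations and the exact degree sequences have nothing to act on until the graphs are written down.
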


\begin{proof}
    We will give an infinite family of graphs for each case. The infinite families for (i), (iii), and (iv) make use of the planar bipartite graph $\base$ as drawn in~\cref{fig:BipPlanar12PMs}. By \cref{lem:inadmin}  any subset of edges from $\{ t_1t_2, t_1t_3, t_2t_3\}$ added to $\base$ is inadmissible. 

\begin{figure}[htb]
    \centering

\begin{tikzpicture}[scale=0.5,main_node/.style={circle,draw,minimum size=1em,inner sep=3pt]}]

\node[main_node, fill=blue!60] (1llm) at (-3, 0) {};
\node[main_node, fill=red] (1lm) at (-1, 0) {};
\node[main_node, fill=red] (1rm) at (1, 0) {};
\node[main_node, fill=red] (1rrm) at (3, 0) {$t_1$};
\node[main_node, fill=blue!60] (1mu) at (0, 1) {};
\node[main_node, fill=red] (1muu) at (0, 3) {$t_3$};
\node[main_node, fill=blue!60] (1md) at (0, -1) {};
\node[main_node, fill=red] (1mdd) at (0, -3) {$t_2$};
\node[main_node, fill=blue!60] (1ru) at (1.5, 1.5) {};
\node[main_node, fill=blue!60] (1rd) at (1.5, -1.5) {};

\node[main_node] (x) at (7, 0) {$x$};
\node[main_node] (z2) at (8.5, 1) {$z_2$};
\node[main_node] (z1) at (8.5, -1) {$z_1$};
\node[main_node] (z2n) at (10, 1) {};
\node[main_node] (z1n) at (10, -1) {};

\node[main_node] (z3) at (13.5, 1) {$z_3$};
\node[main_node] (z4) at (13.5, -1) {$z_4$};
\node[main_node] (y) at (15, 0) {$y$};

\node[main_node, draw=none] (lc) at (10.5,0) {};
\node[main_node, draw=none] (rc) at (13,0) {};

 \path[draw, thick]
(1llm) edge node {} (1muu) 
(1llm) edge node {} (1mdd) 
(1llm) edge node {} (1lm) 
(1lm) edge node {} (1mu)
(1lm) edge node {} (1md)
(1muu) edge node {} (1mu)
(1muu) edge node {} (1ru)
(1muu) edge node {} (1mu)
(1muu) edge node {} (1ru) 
(1rm) edge node {} (1ru) 
(1rm) edge node {} (1rd) 
(1rm) edge node {} (1md)
(1rm) edge node {} (1mu) 
(1mdd) edge node {} (1md) 
(1mdd) edge node {} (1rd) 
(1rrm) edge node {} (1rd) 
(1rrm) edge node {} (1ru)

(x) edge node {} (z1)
(z1) edge node {} (z2)
(z1) edge node {} (z1n)
(z2) edge node {} (z2n)
(z1n) edge node {} (z2n)
(z3) edge node {} (z4)
(z3) edge node {} (y)
(z4) edge node {} (y)

(x) edge[gray,dashed] node {} (1rrm)
(x) edge[gray,dashed] node {} (1mdd)
(z2) edge[gray,dashed] node {} (1rrm)
;

\path[draw, thick, dotted]
(lc) edge node {} (rc)
;

\draw[thick, rounded corners=10pt, gray, dashed]
  (1muu) .. controls ($(z3)+(0.5,2)$) ..  (y);

\end{tikzpicture}

\caption{On the left-hand side, with vertices coloured red and blue, a planar bipartite 10-vertex graph $\base$ with twelve perfect matchings. On the right-hand side, the graph $L_{n-10}^*$. The dashed edges connect $\base$ to $L_{n-10}^*$, making the entire depicted graph $G_{1,n}$ planar, 3-connected, and having twelve perfect matchings.
}
    \label{fig:BipPlanar12PMs}
\end{figure}

Consider, for an even positive integer $m$, the ladder graph $L_{m} := P_{m/2} \Box K_2$ of order $m\ge2$.

\bigskip

    \emph{Case 1. $n_0= 10$, $c=12$, $G$ has four vertices of degree $4$ and $n-4$ vertices of degree $3$.}

    First, suppose that $n=10$. Take $\base$ and add the two edges $t_1t_2$ and $t_1t_3$. The obtained graph $G_{1,10}$ still has 12 perfect matchings since $t_1t_2$ and $t_1t_3$ are inadmissible. Moreover, $G_{1,10}$ is 3-connected, planar, has 4 vertices of degree 4 and $n-4=6$ vertices of degree 3.\footnote{Note that we could delete either $t_1t_2$ or $t_1t_3$ and still obtain a planar 3-connected graph with 12 perfect matchings, which then only has 2 vertices of degree 4.}

    Now, suppose that $n=12$. Take the disjoint union of $\base$ with $K_2$ with $V(K_2) = \{x,y\}$, and add the edges $t_1x, t_1y, t_2x, t_3y$. The obtained graph $G_{1,12}$ is 3-connected, planar, has four vertices of degree $4$ and $n-4=8$ vertices of degree $3$. Furthermore, $\pm(G_{1,12})=12$ since $\pm(K_2)=1$ and $xy$ is the only added admissible edge. All other added edges are inadmissible due to the inadmissibility of $\{t_1,t_2,t_3\}$, by \cref{lem:glue}.

    Lastly, suppose that $n\ge14$ and $n$ is even. In $L_m$, let $z_1,z_2$ be the two adjacent vertices of degree 2 on one end of the ladder and $z_3,z_4$ the ones on the other end. For $m=2$, the pairs coincide, so $z_1=z_3$ and $z_2=z_4$. We define the graph $L_m^*$ for $m\ge4$ as the ladder graph $L_{m-2}$ where we add two vertices $x$, $y$ and add three edges $xz_1$, $yz_3$, $yz_4$. Note that $\pm(L_m^*)=1$.  Consider now $\base$ to be embedded as shown in \cref{fig:BipPlanar12PMs}. Take $G_{1,n}$ as $\base$ where we place $L_{n-10}^*$ in the face whose boundary cycle contains $t_1,t_2$ and $t_3$, and connect it to $\base$ with the edges $t_1z_2$, $t_1x$, $t_2x$, $t_3y$. The graph $G_{1,n}$, depicted in \cref{fig:BipPlanar12PMs}, is a 3-connected (this can be verified by a routine argument using Menger's Theorem) planar $n$-vertex graph that has four vertices of degree $4$, $n-4$ vertices of degree $3$, and has $\pm(\base) \cdot \pm(L_{n-10}^*)=12$ perfect matchings by \cref{lem:glue}.

\bigskip

    \emph{Case 2. $n_0= 14$, $c=18$, $G$ has two vertices of degree $5$ and $n-2$ vertices of degree $3$.}

    The proof is analogous to the one of Case 1, except that we replace $\base$ by $\baseDegFive$ from \cref{fig:BipPlanar18PMs}, which leads to a graph with 18 perfect matchings (instead of 12), two vertices of degree 5 and $n-2$ vertices of degree $3$.\\

\begin{figure}[htb]
    \centering
\begin{tikzpicture}[scale=0.5,main_node/.style={circle,draw,minimum size=1em,inner sep=3pt]}]

\def\radius{4.0}

\node[main_node, fill=blue!60] (0) at (1*\radius, 0) {};
\node[main_node, fill=red] (1) at (0.5*\radius, {0.5*sqrt(3)*\radius}) {};
\node[main_node, fill=blue!60] (2) at (-0.5*\radius, {sqrt(3)*0.5*\radius}) {};
\node[main_node, fill=red] (3) at (-1*\radius, 0) {$t_2$};
\node[main_node, fill=blue!60] (4) at (-0.5*\radius, {sqrt(3)*-0.5*\radius}) {};
\node[main_node, fill=red] (5) at (0.5*\radius, {sqrt(3)*-0.5*\radius}) {};

\node[main_node, fill=red] (6) at ({\radius/sqrt(3)/2}, 0) {$t_3$};
\node[main_node, fill=blue!60] (7) at ({-\radius/sqrt(3)/2}, {sqrt(3)*0.35*\radius-0.2*\radius}) {};

\node[main_node, fill=blue!60] (8) at ({\radius/sqrt(3)/2}, {sqrt(3)*0.35*\radius}) {};
\node[main_node, fill=red] (9) at ({-\radius/sqrt(3)/2}, {sqrt(3)*0.35*\radius}) {};

\node[main_node, fill=blue!60] (10) at ({\radius/sqrt(3)/2}, {-sqrt(3)*0.35*\radius}) {};
\node[main_node, fill=red] (11) at ({-\radius/sqrt(3)/2}, {-sqrt(3)*0.35*\radius}) {};

\node[main_node, fill=red] (12) at ({-\radius/sqrt(3)/2}, {-sqrt(3)*0.35*\radius+0.5*\radius)}) {$t_1$};
\node[main_node, fill=blue!60] (13) at ({-\radius/sqrt(3)/2}, {-sqrt(3)*0.35*\radius+0.2*\radius}) {};

 \path[draw, thick]
(0) edge node {} (1) 
(1) edge node {} (2) 
(2) edge node {} (3) 
(3) edge node {} (4) 
(4) edge node {} (5) 
(5) edge node {} (0)

(0) edge node {} (6) 
(3) edge node {} (7) 
(6) edge node {} (7)

(8) edge node {} (9)
(8) edge node {} (1)
(9) edge node {} (2)
(8) edge node {} (6)
(9) edge node {} (7)

(10) edge node {} (11)
(10) edge node {} (5)
(11) edge node {} (4)
(10) edge node {} (6)

(12) edge node {} (13)
(11) edge node {} (13)
(13) edge node {} (3)

;

\end{tikzpicture}

\caption{A planar bipartite graph $\baseDegFive$ with 18 perfect matchings. A 2-colouring is shown.}
    \label{fig:BipPlanar18PMs}
\end{figure}

    \emph{Case 3. $n_0= 10$, $c=12$ and $G$ is a triangulation.}

    Triangulate the graph $\base$ by adding edges between the vertices of the red partite set. This triangulation $\baseTriang$ is a planar 3-connected 10-vertex graph. All the added edges are inadmissible by \cref{lem:inadmin}, hence, $\pm(\baseTriang)=\pm(\base)=12$, which solves this case for $n=10$.

    Suppose now that $n\ge12$ and $n$ is even. Consider the triangulation $\baseTriang$ with the inadmissible set $\{t_1, t_2, t_3\}$. Add a $K_2$ with $V(K_2) = \{x,y\}$ to $\baseTriang$ and the edges $t_1x, t_1y, t_2x, t_2y, t_3y$ (which is the same for Case 1 where $n=12$, but we added the edge $t_2y$ as well). The obtained graph $G_{3,12}$ is a 3-connected planar triangulation and has 12 perfect matchings since $xy$ is the only added admissible edge. Now, $\{t_1, t_2, x\}$ is an inadmissible set and we can recursively apply the same operation of adding $K_2$ while maintaining the same number of perfect matchings. So, Case 3 holds for any even $n\ge 12$.\\

    \emph{Case 4. $n_0= 22$, $c=144$, $G$ is bipartite, has eight vertices of degree $4$ and $n-8$ vertices of degree~$3$.}
    
    Consider two copies $\base^{(1)}, \base^{(2)}$ of the graph $\base$, where a vertex $v$ from $\base$ is denoted as $v^{(1)}$ in $\base^{(1)}$ and $v^{(2)}$ in $\base^{(2)}$.

    Suppose $n=22$.\footnote{We can also make a bipartite $20$-vertex graph with six vertices of degree 4, $n-6$ of degree $3$ and 144 perfect matchings. This is done by adding $t_i^{(1)}t_i^{(2)}$ for every $i \in \{1,2,3\}$. See \cref{fig:bip_n20}.} Consider the graph $G_{4,22}$ consisting of $\base^{(1)}, \base^{(2)}$, a $K_2$ with $V(K_2) = \{x^{(1)},x^{(2)}\}$, and the edges $t_1^{(1)}t_1^{(2)}, t_1^{(1)}x^{(1)}, t_1^{(2)}x^{(2)}, t_3^{(1)}x^{(1)}, t_3^{(2)}x^{(2)}, t_2^{(1)}t_2^{(2)}$. The graph $G_{4,22}$, depicted in \cref{fig:bip_n22}, is 3-connected, planar, bipartite, has eight vertices of degree 4 and $n-8=14$ vertices of degree 3.

\def\graphsep{18cm}

\begin{figure}[htb]
    \centering
\begin{tikzpicture}[scale=0.5,main_node/.style={circle,draw,minimum size=1em,inner sep=3pt]}]

\node[main_node, fill=blue!60] (1llm) at (-3, 0) {};
\node[main_node, fill=red] (1lm) at (-1, 0) {};
\node[main_node, fill=red] (1rm) at (1, 0) {};
\node[main_node, fill=red] (1rrm) at (3, 0) {$t_1^{(1)}$};
\node[main_node, fill=blue!60] (1mu) at (0, 1) {};
\node[main_node, fill=red] (1muu) at (0, 3) {$t_3^{(1)}$};
\node[main_node, fill=blue!60] (1md) at (0, -1) {};
\node[main_node, fill=red] (1mdd) at (0, -3) {$t_2^{(1)}$};
\node[main_node, fill=blue!60] (1ru) at (1.5, 1.5) {};
\node[main_node, fill=blue!60] (1rd) at (1.5, -1.5) {};

 \path[draw, thick]
(1llm) edge node {} (1muu) 
(1llm) edge node {} (1mdd) 
(1llm) edge node {} (1lm) 
(1lm) edge node {} (1mu)
(1lm) edge node {} (1md)
(1muu) edge node {} (1mu)
(1muu) edge node {} (1ru)
(1muu) edge node {} (1mu)
(1muu) edge node {} (1ru) 
(1rm) edge node {} (1ru) 
(1rm) edge node {} (1rd) 
(1rm) edge node {} (1md)
(1rm) edge node {} (1mu) 
(1mdd) edge node {} (1md) 
(1mdd) edge node {} (1rd) 
(1rrm) edge node {} (1rd) 
(1rrm) edge node {} (1ru) 
;

\begin{scope}[xshift=\graphsep, xscale=-1]
  
  \node[main_node, fill=red] (2llm)  at (-3,  0) {};
  \node[main_node, fill=blue!60]  (2lm)   at (-1,  0) {};
  \node[main_node, fill=blue!60]  (2rm)   at ( 1,  0) {};
  \node[main_node, fill=blue!60]  (2rrm)  at ( 3,  0) {$t_1^{(2)}$};
  \node[main_node, fill=red] (2mu)   at ( 0,  1) {};
  \node[main_node, fill=blue!60]  (2muu)  at ( 0,  3) {$t_3^{(2)}$};
  \node[main_node, fill=red] (2md)   at ( 0, -1) {};
  \node[main_node, fill=blue!60]  (2mdd)  at ( 0, -3) {$t_2^{(2)}$};
  \node[main_node, fill=red] (2ru)   at ( 1.5,  1.5) {};
  \node[main_node, fill=red] (2rd)   at ( 1.5, -1.5) {};

  \path[draw, thick]
   (2llm) edge (2muu)
   (2llm) edge (2mdd)
   (2llm) edge (2lm)
   (2lm)  edge (2mu)
   (2lm)  edge (2md)
   (2muu) edge (2mu)
   (2muu) edge (2ru)
   (2muu) edge (2mu)
   (2muu) edge (2ru)
   (2rm)  edge (2ru)
   (2rm)  edge (2rd)
   (2rm)  edge (2md)
   (2rm)  edge (2mu)
   (2mdd) edge (2md)
   (2mdd) edge (2rd)
   (2rrm) edge (2rd)
   (2rrm) edge (2ru)
  ;
\end{scope}

\path[draw, thick]
(1rrm) edge node {} (2rrm) 
(1muu) edge node {} (2muu) 
(1mdd) edge node {} (2mdd) 
;

\end{tikzpicture}
\caption{A planar bipartite 3-connected $20$-vertex graph with 144 perfect matchings.}
    \label{fig:bip_n20}
\end{figure}

\begin{figure}[htb]
    \centering
\begin{tikzpicture}[scale=0.5,main_node/.style={circle,draw,minimum size=1em,inner sep=3pt]}]

\node[main_node, fill=blue!60] (1llm) at (-3, 0) {};
\node[main_node, fill=red] (1lm) at (-1, 0) {};
\node[main_node, fill=red] (1rm) at (1, 0) {};
\node[main_node, fill=red] (1rrm) at (3, 0) {$t_1^{(1)}$};
\node[main_node, fill=blue!60] (1mu) at (0, 1) {};
\node[main_node, fill=red] (1muu) at (0, 3) {$t_3^{(1)}$};
\node[main_node, fill=blue!60] (1md) at (0, -1) {};
\node[main_node, fill=red] (1mdd) at (0, -3) {$t_2^{(1)}$};
\node[main_node, fill=blue!60] (1ru) at (1.5, 1.5) {};
\node[main_node, fill=blue!60] (1rd) at (1.5, -1.5) {};

 \path[draw, thick]
(1llm) edge node {} (1muu) 
(1llm) edge node {} (1mdd) 
(1llm) edge node {} (1lm) 
(1lm) edge node {} (1mu)
(1lm) edge node {} (1md)
(1muu) edge node {} (1mu)
(1muu) edge node {} (1ru)
(1muu) edge node {} (1mu)
(1muu) edge node {} (1ru) 
(1rm) edge node {} (1ru) 
(1rm) edge node {} (1rd) 
(1rm) edge node {} (1md)
(1rm) edge node {} (1mu) 
(1mdd) edge node {} (1md) 
(1mdd) edge node {} (1rd) 
(1rrm) edge node {} (1rd) 
(1rrm) edge node {} (1ru) 
;

\node[main_node, fill=blue!60] (x) at (\graphsep/3, 3) {$x^{(1)}$};
\node[main_node, fill=red] (y) at (2*\graphsep/3, 3) {$x^{(2)}$};

\begin{scope}[xshift=\graphsep, xscale=-1]
  
  \node[main_node, fill=red] (2llm)  at (-3,  0) {};
  \node[main_node, fill=blue!60]  (2lm)   at (-1,  0) {};
  \node[main_node, fill=blue!60]  (2rm)   at ( 1,  0) {};
  \node[main_node, fill=blue!60]  (2rrm)  at ( 3,  0) {$t_1^{(2)}$};
  \node[main_node, fill=red] (2mu)   at ( 0,  1) {};
  \node[main_node, fill=blue!60]  (2muu)  at ( 0,  3) {$t_3^{(2)}$};
  \node[main_node, fill=red] (2md)   at ( 0, -1) {};
  \node[main_node, fill=blue!60]  (2mdd)  at ( 0, -3) {$t_2^{(2)}$};
  \node[main_node, fill=red] (2ru)   at ( 1.5,  1.5) {};
  \node[main_node, fill=red] (2rd)   at ( 1.5, -1.5) {};

  \path[draw, thick]
   (2llm) edge (2muu)
   (2llm) edge (2mdd)
   (2llm) edge (2lm)
   (2lm)  edge (2mu)
   (2lm)  edge (2md)
   (2muu) edge (2mu)
   (2muu) edge (2ru)
   (2muu) edge (2mu)
   (2muu) edge (2ru)
   (2rm)  edge (2ru)
   (2rm)  edge (2rd)
   (2rm)  edge (2md)
   (2rm)  edge (2mu)
   (2mdd) edge (2md)
   (2mdd) edge (2rd)
   (2rrm) edge (2rd)
   (2rrm) edge (2ru)
  ;
\end{scope}

\path[draw, thick]
(1rrm) edge node {} (2rrm) 
(1rrm) edge node {} (x) 
(2rrm) edge node {} (y) 
(1muu) edge node {} (x) 
(2muu) edge node {} (y) 
(x) edge node {} (y)
(1mdd) edge node {} (2mdd) 
;

\end{tikzpicture}
\caption{A planar bipartite 3-connected $22$-vertex graph with 144 perfect matchings.}
    \label{fig:bip_n22}
\end{figure}

    By \cref{lem:inadmin} the set  $\{t_1^{(1)}, t_2^{(1)}, t_3^{(1)}\}$ is inadmissible in $H^{(1)}$ and $\{t_1^{(2)}, t_2^{(2)}, t_3^{(2)}\}$ is inadmissible in $H^{(2)}$. 
    Apply \cref{lem:glue} with $H = H^{(1)}$ and $L = K_2$. When doing so, we add the edges $t_3^{(1)} x^{(1)}$ and $t_1^{(1)} x^{(1)}$. Let us call the resulting graph $R$. We then apply \cref{lem:glue} with $H = H^{(2)}$ and $L = R$. Thus, all added edges except $x^{(1)}x^{(2)}$ are inadmissible. Since $\pm(\base) = 12$ and $\pm(K_2) = 1$, we have $\pm(G_{4,22})= 144$.

    Suppose $n\ge24$ and $n \equiv_4 0$, which stands for $n = 0$ modulo 4. Consider the ladder graph $L_{n-18}$ with $z^{(1)}$ and $z^{(2)}$ as two vertices of degree 2 in $L_{n-18}$ which are at distance $\frac{n-18}{2}$ (this is the diameter of $L_{n-18}$) from each other. Now, construct the graph from $\base^{(1)}, \base^{(2)}$, and $L_{n-18}$, where we identify $z^{(1)}$ with $t_1^{(1)}$ and $z^{(2)}$ with $t_1^{(2)}$. 
    Additionally, add the three edges $t_2^{(1)}y^{(1)},t_3^{(2)}y^{(2)}, t_3^{(1)}t_2^{(2)}$, where $y^{(1)}$ and $y^{(2)}$ are the degree 2 neighbours of $z^{(1)}$ and $z^{(2)}$ in $L_{n-18}$, respectively. The resulting graph $G_{4,n \equiv_4 0}$, depicted in \cref{fig:bip_n0mod4}, is 3-connected, planar, bipartite, has order $n$ with $n\ge24$ and $n \equiv_4 0$, has 8 vertices of degree 4 and $n-8$ vertices of degree 3.

    \bigskip

\begin{figure}[htb]
    \centering
    
\begin{tikzpicture}[scale=0.5,main_node/.style={circle,draw,minimum size=1em,inner sep=3pt]}]

\path[use as bounding box] (-6cm,-4.5cm) rectangle (\graphsep+6cm,4.5cm);

\node[main_node, fill=blue!60] (1llm) at (-3, 0) {};
\node[main_node, fill=red] (1lm) at (-1, 0) {};
\node[main_node, fill=red] (1rm) at (1, 0) {};
\node[main_node, fill=red] (1rrm) at (3, 0) {$t_1^{(1)}$};
\node[main_node, fill=blue!60] (1mu) at (0, 1) {};
\node[main_node, fill=red] (1muu) at (0, 3) {$t_3^{(1)}$};
\node[main_node, fill=blue!60] (1md) at (0, -1) {};
\node[main_node, fill=red] (1mdd) at (0, -3) {$t_2^{(1)}$};
\node[main_node, fill=blue!60] (1ru) at (1.5, 1.5) {};
\node[main_node, fill=blue!60] (1rd) at (1.5, -1.5) {};

 \path[draw, thick]
(1llm) edge node {} (1muu) 
(1llm) edge node {} (1mdd) 
(1llm) edge node {} (1lm) 
(1lm) edge node {} (1mu)
(1lm) edge node {} (1md)
(1muu) edge node {} (1mu)
(1muu) edge node {} (1ru)
(1muu) edge node {} (1mu)
(1muu) edge node {} (1ru) 
(1rm) edge node {} (1ru) 
(1rm) edge node {} (1rd) 
(1rm) edge node {} (1md)
(1rm) edge node {} (1mu) 
(1mdd) edge node {} (1md) 
(1mdd) edge node {} (1rd) 
(1rrm) edge node {} (1rd) 
(1rrm) edge node {} (1ru) 
;

\node[main_node, fill=blue!60] (u1) at (\graphsep/4, 1) {};
\node[main_node, fill=blue!60] (d1) at (\graphsep/4, -1) {};
\node[main_node, fill=red] (u2) at (\graphsep/4+1.5cm, 1) {};
\node[main_node, fill=red] (d2) at (\graphsep/4+1.5cm, -1) {};

\node[main_node, fill=blue!60] (u3) at (\graphsep/4+3.0cm, 1) {};
\node[main_node, fill=blue!60] (d3) at (\graphsep/4+3.0cm, -1) {};
\node[main_node, fill=red] (u4) at (\graphsep/4+4.5cm, 1) {};
\node[main_node, fill=red] (d4) at (\graphsep/4+4.5cm, -1) {};

\node[main_node, draw=none] (m4) at (\graphsep/4+5.0cm, 0) {};

\node[main_node, fill=red] (u-1) at (3*\graphsep/4, 1) {};
\node[main_node, fill=red] (d-1) at (3*\graphsep/4, -1) {};
\node[main_node, fill=blue!60] (u-2) at (3*\graphsep/4-1.5cm, 1) {};
\node[main_node, fill=blue!60] (d-2) at (3*\graphsep/4-1.5cm, -1) {};

\node[main_node, draw=none] (m-2) at (3*\graphsep/4-2.0cm, 0) {};

\begin{scope}[xshift=\graphsep, xscale=-1]
  
  \node[main_node, fill=red] (2llm)  at (-3,  0) {};
  \node[main_node, fill=blue!60]  (2lm)   at (-1,  0) {};
  \node[main_node, fill=blue!60]  (2rm)   at ( 1,  0) {};
  \node[main_node, fill=blue!60]  (2rrm)  at ( 3,  0) {$t_1^{(2)}$};
  \node[main_node, fill=red] (2mu)   at ( 0,  1) {};
  \node[main_node, fill=blue!60]  (2muu)  at ( 0,  3) {$t_3^{(2)}$};
  \node[main_node, fill=red] (2md)   at ( 0, -1) {};
  \node[main_node, fill=blue!60]  (2mdd)  at ( 0, -3) {$t_2^{(2)}$};
  \node[main_node, fill=red] (2ru)   at ( 1.5,  1.5) {};
  \node[main_node, fill=red] (2rd)   at ( 1.5, -1.5) {};

  \path[draw, thick]
   (2llm) edge (2muu)
   (2llm) edge (2mdd)
   (2llm) edge (2lm)
   (2lm)  edge (2mu)
   (2lm)  edge (2md)
   (2muu) edge (2mu)
   (2muu) edge (2ru)
   (2muu) edge (2mu)
   (2muu) edge (2ru)
   (2rm)  edge (2ru)
   (2rm)  edge (2rd)
   (2rm)  edge (2md)
   (2rm)  edge (2mu)
   (2mdd) edge (2md)
   (2mdd) edge (2rd)
   (2rrm) edge (2rd)
   (2rrm) edge (2ru)
  ;
\end{scope}

\path[draw, thick]
(1rrm) edge node {} (u1) 
(1rrm) edge node {} (d1) 
(1mdd) edge node {} (d1)

(u1) edge node {} (u2) 
(u1) edge node {} (d2) 
(d1) edge node {} (d2)

(u2) edge node {} (u3) 
(u2) edge node {} (d3) 
(d2) edge node {} (d3)

(u3) edge node {} (u4) 
(u3) edge node {} (d4) 
(d3) edge node {} (d4)

(2rrm) edge node {} (u-1) 
(2rrm) edge node {} (d-1) 
(2muu) edge node {} (u-1)

(u-1) edge node {} (u-2) 
(d-1) edge node {} (u-2) 
(d-1) edge node {} (d-2) 

;

\path[draw, thick, dotted]
(m4) edge node {} (m-2)

;

\draw[thick, rounded corners=10pt]
  (1muu) .. controls ($(2muu)+(5,7)$) and ($(2llm)+(7,-2)$) ..  (2mdd);

\end{tikzpicture}

\caption{A planar bipartite 3-connected $n$-vertex graph with $n\equiv_40$ and 144 perfect matchings.}
    \label{fig:bip_n0mod4}
\end{figure}
    
    We now prove that $\pm(G_{4,n \equiv_4 0}) = 144.$ Let $L' := G_{4,n \equiv_4 0} - H^{(1)} - H^{(2)}$. First apply \cref{lem:glue} with $H = H^{(1)}$ and $L = L'$; here $D = \{ t_1^{(1)}, t_2^{(1)} \}$. We denote the resulting graph by $H'$. We then apply \cref{lem:glue} with $H = H^{(2)}$ and $L = H'$; here $D = \{ t_1^{(2)}, t_2^{(2)}, t_3^{(2)} \}$. Since the induced subgraph $G_{4,n \equiv_4 0}[V(L_{n-18})\setminus \{t_1^{(1)}, t_1^{(2)}\}]$, which is isomorphic to $L'$, only has one perfect matching, it follows that $\pm(G_{4,n \equiv_4 0})=\pm(\base)^2=144$.

    Suppose $n\ge24$ and $n \equiv_4 2$. Similar to the case $n \equiv_4 0$, consider $L_{n-20}$ with $z^{(1)}$ and $z^{(2)}$ as two vertices of degree 2 in $L_{n-20}$ which are at distance $\frac{n-20}{2}$. Consider the graph constructed from $\base^{(1)}, \base^{(2)}$, and $L_{n-20}$, where we identify $z^{(1)}$ with $t_1^{(1)}$ and $z^{(2)}$ with $t_1^{(2)}$. Take $y^{(1)}$ and $y^{(2)}$ as the degree 2 neighbours of $z^{(1)}$ and $z^{(2)}$ in $L_{n-20}$, respectively. Take $w^{(2)}$ as the degree 3 neighbour of $z^{(2)}$ in $L_{n-20}$.  Delete the edge $w^{(2)} t_1^{(2)}$.
    In contrast with the previous construction, we now add a $K_2$ with $V(K_2) = \{x^{(1)},x^{(2)}\}$, and also the edges $t_2^{(1)}y^{(1)}, t_3^{(1)} x^{(1)}, t_1^{(2)} x^{(2)}, t_2^{(2)} w^{(2)},  t_3^{(2)} x^{(2)}, y^{(2)} x^{(1)} $. 
    The resulting graph $G_{4,n \equiv_4 2}$, depicted in \cref{fig:bip_n2mod4}, is 3-connected, planar, bipartite, has order $n$ with $n\ge24$ and $n \equiv_4 2$, has 8 vertices of degree 4 and $n-8$ vertices of degree 3.

    \begin{figure}[htb]
    \centering
    
\begin{tikzpicture}[scale=0.5,main_node/.style={circle,draw,minimum size=1em,inner sep=3pt]}]

\node[main_node, fill=blue!60] (1llm) at (-3, 0) {};
\node[main_node, fill=red] (1lm) at (-1, 0) {};
\node[main_node, fill=red] (1rm) at (1, 0) {};
\node[main_node, fill=red] (1rrm) at (3, 0) {$t_1^{(1)}$};
\node[main_node, fill=blue!60] (1mu) at (0, 1) {};
\node[main_node, fill=red] (1muu) at (0, 3) {$t_3^{(1)}$};
\node[main_node, fill=blue!60] (1md) at (0, -1) {};
\node[main_node, fill=red] (1mdd) at (0, -3) {$t_2^{(1)}$};
\node[main_node, fill=blue!60] (1ru) at (1.5, 1.5) {};
\node[main_node, fill=blue!60] (1rd) at (1.5, -1.5) {};

 \path[draw, thick]
(1llm) edge node {} (1muu) 
(1llm) edge node {} (1mdd) 
(1llm) edge node {} (1lm) 
(1lm) edge node {} (1mu)
(1lm) edge node {} (1md)
(1muu) edge node {} (1mu)
(1muu) edge node {} (1ru)
(1muu) edge node {} (1mu)
(1muu) edge node {} (1ru) 
(1rm) edge node {} (1ru) 
(1rm) edge node {} (1rd) 
(1rm) edge node {} (1md)
(1rm) edge node {} (1mu) 
(1mdd) edge node {} (1md) 
(1mdd) edge node {} (1rd) 
(1rrm) edge node {} (1rd) 
(1rrm) edge node {} (1ru) 
;

\node[main_node, fill=blue!60] (u1) at (\graphsep/4, 1) {};
\node[main_node, fill=blue!60] (d1) at (\graphsep/4, -1) {};
\node[main_node, fill=red] (u2) at (\graphsep/4+1.5cm, 1) {};
\node[main_node, fill=red] (d2) at (\graphsep/4+1.5cm, -1) {};

\node[main_node, fill=blue!60] (u3) at (\graphsep/4+3.0cm, 1) {};
\node[main_node, fill=blue!60] (d3) at (\graphsep/4+3.0cm, -1) {};
\node[main_node, fill=red] (u4) at (\graphsep/4+4.5cm, 1) {};
\node[main_node, fill=red] (d4) at (\graphsep/4+4.5cm, -1) {};

\node[main_node, draw=none] (m4) at (\graphsep/4+5cm, 0) {};

\node[main_node, fill=red] (u-1) at (3*\graphsep/4, 1) {};
\node[main_node, fill=red] (d-1) at (3*\graphsep/4, -1) {};
\node[main_node, fill=blue!60] (u-2) at (3*\graphsep/4-1.5cm, 1) {};
\node[main_node, fill=blue!60] (d-2) at (3*\graphsep/4-1.5cm, -1) {};

\node[main_node, fill=red] (uu-1) at (\graphsep-3cm, 3) {$x^{(2)}$};
\node[main_node, fill=blue!60] (uu-2) at (3*\graphsep/4-1.5cm, 3) {$x^{(1)}$};

\node[main_node, draw=none] (m-2) at (3*\graphsep/4-2cm, 0) {};

\begin{scope}[xshift=\graphsep, xscale=-1]
  
  \node[main_node, fill=red] (2llm)  at (-3,  0) {};
  \node[main_node, fill=blue!60]  (2lm)   at (-1,  0) {};
  \node[main_node, fill=blue!60]  (2rm)   at ( 1,  0) {};
  \node[main_node, fill=blue!60]  (2rrm)  at ( 3,  0) {$t_1^{(2)}$};
  \node[main_node, fill=red] (2mu)   at ( 0,  1) {};
  \node[main_node, fill=blue!60]  (2muu)  at ( 0,  3) {$t_3^{(2)}$};
  \node[main_node, fill=red] (2md)   at ( 0, -1) {};
  \node[main_node, fill=blue!60]  (2mdd)  at ( 0, -3) {$t_2^{(2)}$};
  \node[main_node, fill=red] (2ru)   at ( 1.5,  1.5) {};
  \node[main_node, fill=red] (2rd)   at ( 1.5, -1.5) {};

  \path[draw, thick]
   (2llm) edge (2muu)
   (2llm) edge (2mdd)
   (2llm) edge (2lm)
   (2lm)  edge (2mu)
   (2lm)  edge (2md)
   (2muu) edge (2mu)
   (2muu) edge (2ru)
   (2muu) edge (2mu)
   (2muu) edge (2ru)
   (2rm)  edge (2ru)
   (2rm)  edge (2rd)
   (2rm)  edge (2md)
   (2rm)  edge (2mu)
   (2mdd) edge (2md)
   (2mdd) edge (2rd)
   (2rrm) edge (2rd)
   (2rrm) edge (2ru)
  ;
\end{scope}

\path[draw, thick]
(1rrm) edge node {} (u1) 
(1rrm) edge node {} (d1) 
(1mdd) edge node {} (d1)

(u1) edge node {} (u2) 
(u1) edge node {} (d2) 
(d1) edge node {} (d2)

(u2) edge node {} (u3) 
(u2) edge node {} (d3) 
(d2) edge node {} (d3)

(u3) edge node {} (u4) 
(u3) edge node {} (d4) 
(d3) edge node {} (d4)

(2rrm) edge node {} (u-1) 
(2mdd) edge node {} (d-1)

(u-1) edge node {} (u-2) 
(d-1) edge node {} (u-2) 
(d-1) edge node {} (d-2) 

(uu-1) edge node {} (2muu)
(uu-1) edge node {} (2rrm)
(uu-1) edge node {} (uu-2)
(uu-2) edge node {} (1muu)
(uu-2) edge node {} (u-1)

;

\path[draw, thick, dotted]
(m4) edge node {} (m-2)

;

\end{tikzpicture}

\caption{A planar bipartite 3-connected $n$-vertex graph with $n\equiv_42$ and 144 perfect matchings.}
    \label{fig:bip_n2mod4}
\end{figure}

    We now prove that $\pm(G_{4,n \equiv_4 2}) = 144.$ Let $L'' := G_{4,n \equiv_4 2} - H^{(1)} - H^{(2)}$. First apply \cref{lem:glue} with $H = H^{(1)}$ and $L = L''$; here $D = \{ t_1^{(1)}, t_2^{(1)}, t_3^{(1)} \}$. We denote the resulting graph by $H''$. We then apply \cref{lem:glue} with $H = H^{(2)}$ and $L = H''$; here $D = \{ t_1^{(2)}, t_2^{(2)}, t_3^{(2)} \}$. Since $L''$ only has one perfect matching, it follows that $\pm(G_{4,n \equiv_4 2})=144$.

    \bigskip

    \emph{Case 5. $n_0= 24$, $c=768$ and $G$ has minimum degree $4$ and $G$ is a triangulation.}

     Consider $R_4$ as a $C_4$ with consecutive vertices $z_1, z_2, z_3, z_4$ to which we add the edge $z_2 z_4$. Consider $R_m$ with even $m\ge 6$ as the disjoint union of $R_4$ and a path graph $P_{m-4}$ with consecutive vertices $p_1, \dots, p_{m-4}$ where we add the edge $z_1 p_{m-4}$. Note that $\pm(R_m)=2$. 
     
     Suppose $n\ge 24$ is even and take $G_{5,n}$ as $\baseMinDegFour$ as drawn in \cref{fig:minDeg4}. The graph $\baseMinDegFour$ can be obtained from the graph $\base$ in two steps. First, just like in Case 3, we triangulate $\base$ by adding edges between the vertices of the red partite set. We call the resulting graph $\baseTriang$. Second, we replace each vertex of the blue partite set by a triangle and triangulate the graph by adding two edges to each vertex of an added triangle. The obtained graph is $\baseMinDegFour$. By doing the replacement operation for exactly one blue vertex, the resulting graph has twice the number of perfect matchings of the original graph. Since there are five blue vertices in $\baseTriang$ and because $\pm(\baseTriang)=12$, we thus have $\pm(\baseMinDegFour)=12 \cdot 2^5=384$. Furthermore, each inadmissible set of three vertices in $\baseTriang$ stays inadmissible in $\baseMinDegFour$, in particular $\{t_1,t_2,t_3\}$ stays inadmissible.  Now, place $R_{n-20}$ in the common face of vertices $t_1,t_2,t_3$. Add the edge $t_1 z_1$ in case $n=24$ and $t_1 p_1$ otherwise. In addition, add the edges $x p_1, \dots, x p_{n-24}, x z_1, x y, x z_3$ for $x \in \{t_2,t_3\}$ where $y=z_2$ for $x=t_2$ and $y=z_4$ for $x=t_3$. The obtained graph $G_{5,n}$ is a planar triangulation with minimum degree 4. 

     \def\GraphUnit{1.4} 
     \begin{figure}[htb]
	\centering
\begin{tikzpicture}[scale=0.45,rotate=-90,x=\GraphUnit cm,
	y=\GraphUnit cm,main_node/.style={circle,draw,minimum size=1em,inner sep=3pt]}]

	\node[main_node] (v1)  at (-5.0, 0.0)   {$t_3$};
	\node[main_node] (v5)  at ( 5.0, 0.0)   {$t_2$};
	\node[main_node] (v6)  at ( 0.0, 8.0)   {$t_1$};
	
	\node[main_node] (v0)  at (-1.0, 0.2909+0.6) {};
	\node[main_node] (v2)  at ( 0.0, 0.7273+0.9) {};
	\node[main_node] (v3)  at ( 1.0, 0.2909+0.6) {};
	\node[main_node] (v4)  at ( 0.0, 0.1455+0.2) {};
	
	\node[main_node] (v7)  at (-2.0+0.5, 3.6364+1.3) {};
	\node[main_node] (v8)  at (-2.0+0.5, 2.4727+1.3) {};
	\node[main_node] (v9)  at ( 0.0, 2.1818+2.3) {};
	\node[main_node] (v10) at (-2.0+0.5, 1.0182+1.5) {};
	\node[main_node] (v11) at (-2.0+0.5, 0.7273+1.1) {};
	\node[main_node] (v12) at (-1.0+0.5, 1.1636+1.5) {};
	\node[main_node] (v13) at ( 1.0-0.5, 1.1636+1.5) {};
	\node[main_node] (v14) at ( 2.0-0.5, 0.7273+1.1) {};
	\node[main_node] (v15) at ( 2.0-0.5, 1.0182+1.5) {};
	\node[main_node] (v16) at ( 2.0-0.5, 2.4727+1.3) {};
	\node[main_node] (v17) at ( 2.0-0.5, 3.6364+1.3) {};
	\node[main_node] (v18) at ( 1.0-0.5, 4.0727+1.3) {};
	\node[main_node] (v19) at (-1.0+0.5, 4.0727+1.3) {};

	\node[main_node,fill=green!60] (pn24) at ( 0.0, 11.0)   {$p_{n-24}$};
	\node[main_node, draw=none] (lc) at ( 0.0, 12)   {};
	\node[main_node, draw=none] (rc) at ( 0.0, 13.5)   {};
	\node[main_node,fill=green!60] (p2) at ( 0.0, 14)   {$p_2$};
	\node[main_node,fill=green!60] (p1) at ( 0.0, 16.5)   {$p_1$};
	
	\node[main_node,fill=green!60] (z1) at ( 0.0, 19)   {$z_1$};
	\node[main_node,fill=green!60] (z3) at ( 0.0, 21.5)   {$z_3$};
	
	\node[main_node,fill=green!60] (z2) at ( 1.0, 20.25)   {$z_2$};
	\node[main_node,fill=green!60] (z4) at ( -1.0, 20.25)   {$z_4$};

	\draw[thick] (v0) -- (v1);
	\draw[thick] (v0) -- (v2);
	\draw[thick] (v0) -- (v3);
	\draw[thick] (v0) -- (v4);
	
	\draw[thick] (v1) -- (v2);
	\draw[thick] (v1) -- (v4);
	\draw[thick] (v1) -- (v5);
	\draw[thick] (v1) -- (v6);
	\draw[thick] (v1) -- (v7);
	\draw[thick] (v1) -- (v8);
	\draw[thick] (v1) -- (v9);
	\draw[thick] (v1) -- (v10);
	\draw[thick] (v1) -- (v11);
	
	\draw[thick] (v2) -- (v3);
	\draw[thick] (v2) -- (v5);
	\draw[thick] (v2) -- (v9);
	\draw[thick] (v2) -- (v11);
	\draw[thick] (v2) -- (v12);
	\draw[thick] (v2) -- (v13);
	\draw[thick] (v2) -- (v14);
	
	\draw[thick] (v3) -- (v4);
	\draw[thick] (v3) -- (v5);
	\draw[thick] (v4) -- (v5);
	
	\draw[thick] (v5) -- (v6);
	\draw[thick] (v5) -- (v9);
	\draw[thick] (v5) -- (v14);
	\draw[thick] (v5) -- (v15);
	\draw[thick] (v5) -- (v16);
	\draw[thick] (v5) -- (v17);
	
	\draw[thick] (v6) -- (v7);
	\draw[thick] (v6) -- (v9);
	\draw[thick] (v6) -- (v17);
	\draw[thick] (v6) -- (v18);
	\draw[thick] (v6) -- (v19);
	
	\draw[thick] (v7) -- (v8);
	\draw[thick] (v7) -- (v19);
	\draw[thick] (v8) -- (v9);
	\draw[thick] (v8) -- (v19);
	
	\draw[thick] (v9) -- (v10);
	\draw[thick] (v9) -- (v12);
	\draw[thick] (v9) -- (v13);
	\draw[thick] (v9) -- (v15);
	\draw[thick] (v9) -- (v16);
	\draw[thick] (v9) -- (v18);
	\draw[thick] (v9) -- (v19);
	
	\draw[thick] (v10) -- (v11);
	\draw[thick] (v10) -- (v12);
	\draw[thick] (v11) -- (v12);
	
	\draw[thick] (v13) -- (v14);
	\draw[thick] (v13) -- (v15);
	\draw[thick] (v14) -- (v15);
	
	\draw[thick] (v16) -- (v17);
	\draw[thick] (v16) -- (v18);
	\draw[thick] (v17) -- (v18);

	\draw[thick] (p1) -- (p2);
	\draw[thick] (z1) -- (p1);
	\draw[thick] (z1) -- (z2);
	\draw[thick] (z1) -- (z4);
	\draw[thick] (z2) -- (z3);
	\draw[thick] (z2) -- (z4);
	\draw[thick] (z3) -- (z4);
	
	 \path[draw, thick]
	(v6) edge[gray, dashed] node {} (pn24)
	(v5) edge[gray, dashed] node {} (pn24)
	(v1) edge[gray, dashed] node {} (pn24)
	(v5) edge[gray, dashed] node {} (p2)
	(v1) edge[gray, dashed] node {} (p2)
	(v5) edge[gray, dashed] node {} (p1)
	(v1) edge[gray, dashed] node {} (p1)
	(v5) edge[gray, dashed] node {} (z1)
	(v1) edge[gray, dashed] node {} (z1)
	(v5) edge[gray, dashed] node {} (z2)
	(v1) edge[gray, dashed] node {} (z4)
	;

	\path[draw, thick, dotted]
	(lc) edge node {} (rc)
	;

    \draw[thick, rounded corners=10pt, gray, dashed]
  (v1) .. controls ($(z3)+(-3,0)$) ..  (z3);

  \draw[thick, rounded corners=10pt, gray, dashed]
  (v5) .. controls ($(z3)+(3,0)$) ..  (z3);

\end{tikzpicture}

\caption{On the left-hand side, a planar 20-vertex triangulation $\baseMinDegFour$ with 384 perfect matchings. 
On the right-hand side, the graph $R_m$ with green vertices. The dashed edges connect $\baseMinDegFour$ to $R_{n-20}$, making the entire depicted graph $G_{5,n}$ a planar triangulation with minimum degree 4 and 768 perfect matchings.
}
\label{fig:minDeg4}
\end{figure}

    By applying \cref{lem:glue} with $H=\baseMinDegFour$, $D=\{t_1,t_2,t_3\}$, and $L=R_{n-20}$, we conclude that $\pm(G_{5,n})=\pm(\baseMinDegFour) \cdot \pm(R_{n-20})=768$. \end{proof}

The following natural question remains open. Is there a constant $c < 12$ such that there are infinitely many matchable planar 3-connected graphs, each with exactly $c$ perfect matchings? We note that such graphs must have at least six perfect matchings \cite{L72}.

\section{At least a linear number of perfect matchings:\\
The 4- and 5-connected cases}\label{sec:4and5connPl}

\subsection{The 4-connected case}

An $n$-vertex graph is called \textit{$k$-extendable} if any matching of size $k<n/2$ can be extended to a perfect matching. Every planar $4$-connected graph of even order is 1-extendable, see~\cite{P92}. This also follows from the theorem that in a planar 4-connected graph, every edge lies in a Hamiltonian cycle, see for instance \cite{OV14}. 
This together with Lemma~\ref{lem:matchspacedim} immediately yields the following.

\begin{corollary}
Every planar $4$-connected graph of even order contains $\Omega(n)$ perfect matchings.
\end{corollary}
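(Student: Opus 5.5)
The plan is to combine 1-extendability with \cref{lem:matchspacedim}, exactly as the sentence preceding the corollary indicates. Let $G$ be a planar $4$-connected graph of even order $n$. For $n$ small (say $n\le 4$, where $4$-connectivity forces $G=K_5$-type exceptions that do not occur at even order, or $G$ has at least one perfect matching trivially), the statement is vacuous up to the constant in $\Omega(n)$. So we may assume $n\ge 6$, and in particular $1<n/2$.

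First, we show that $G$ is matchable and matching covered. By the cited result of Plummer~\cite{P92}, $G$ is $1$-extendable. Applying $1$-extendability to any single edge already yields a perfect matching, so $G$ is matchable. Moreover, every edge lies in some perfect matching. Equivalently, by \cite{OV14}, every edge $e$ lies on a Hamiltonian cycle $C$ of $G$. Since $n$ is even, $C$ splits into two perfect matchings, one of which contains $e$. Hence $F(G)=G$ and $|E(F(G))|=|E(G)|$.

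Second, we bound $|E(G)|$ from below. $4$-connectivity gives minimum degree at least $4$, so $|E(G)|\ge 2n$. By \cref{lem:matchspacedim},
$$\pm(G)\ \ge\ \frac{|E(G)|-n}{2}+2\ \ge\ \frac{2n-n}{2}+2=\frac n2+2,$$
which is $\Omega(n)$.

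There is no real obstacle here. The only step requiring care is justifying that every edge is admissible, and this is delivered directly by the cited extendability and Hamiltonicity results. It is then worth noting that the hypothesis of \cref{lem:matchspacedim} (matchable, at least four vertices) is satisfied.
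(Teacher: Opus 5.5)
Your proof is correct and is essentially the paper's argument. $1$-extendability \cite{P92}, or alternatively a Hamiltonian cycle through every edge \cite{OV14}, makes $G$ matching covered, and then $\delta(G)\ge 4$ together with \cref{lem:matchspacedim} gives $\pm(G)\ge n/2+2$. Your small-$n$ preamble is muddled but unnecessary, since a $4$-connected graph has at least five vertices and a planar one of even order has at least six (the octahedron); also, the Hamiltonian-cycle route is an alternative justification of $1$-extendability, not an equivalent one.
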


The \textit{double wheel graph} $DW_n$ on $n$ vertices is the planar 4-connected graph defined as the join of an $(n-2)$-cycle and $2K_1$. The following fact is obvious.

\begin{proposition}
    If $n$ is even, then $\pm(DW_n)=\frac{(n-2)^2}{2}$.
\end{proposition}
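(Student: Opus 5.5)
Let the two hubs be $a$ and $b$, and the rim cycle $C=v_1v_2\cdots v_{m}v_1$ with $m=n-2$, which is even since $n$ is even. I will sort the perfect matchings $M$ of $DW_n$ by how the hubs are matched. The edge $ab$ is absent, because the hubs form $2K_1$. So in every perfect matching both $a$ and $b$ are matched to rim vertices. The count is then
$$\pm(DW_n)=\sum_{\{i,j\}} \pm\bigl(C-v_i-v_j\bigr),$$
where the sum runs over ordered choices: $a$ is matched to $v_i$, $b$ is matched to $v_j$, and $i\neq j$. The rest of $M$ must be a perfect matching of the rim with $v_i$ and $v_j$ removed.

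Next I evaluate $\pm(C-v_i-v_j)$. Removing two vertices from the cycle leaves two vertex-disjoint paths, whose orders sum to $m-2$ and are $d-1$ and $m-d-1$, where $d$ is the cyclic distance between $v_i$ and $v_j$. A path has exactly one perfect matching if its order is even and none if its order is odd. Since $m$ is even, both paths are even precisely when $d$ is odd, i.e.\ when $v_i$ and $v_j$ lie in different colour classes of the bipartite rim. When $d=1$ one of the paths is empty, which still counts as one matching. So $\pm(C-v_i-v_j)$ equals $1$ if $v_i,v_j$ have opposite parity and $0$ otherwise.

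It remains to count ordered pairs $(v_i,v_j)$ of opposite parity. There are $m$ choices for $a$'s partner and then $m/2$ choices for $b$'s partner of the other colour. This gives $m\cdot m/2=(n-2)^2/2$, as claimed.

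No step is a real obstacle. The only points needing care are to note that $ab\notin E(DW_n)$, to handle the degenerate empty path when $d=1$, and to count the pairs as ordered, since the hubs are distinguishable. One could check the result against a small case such as $n=6$, where $DW_6$ is the octahedron with $8$ perfect matchings, and indeed $(6-2)^2/2=8$.
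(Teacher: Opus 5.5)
Your proof is correct and is the routine verification the paper intends; the paper gives no argument and just calls the fact obvious, and your approach (condition on the rim partners of the two non-adjacent hubs, observe that $C-v_i-v_j$ splits into two paths that are both even exactly when $v_i,v_j$ have opposite colours, then count $m\cdot m/2=(n-2)^2/2$ ordered pairs) fills that in. One notational fix: you index the sum by $\{i,j\}$ but correctly count ordered pairs, so write $(i,j)$ to match the argument.
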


There are special cases when we can guarantee an exponential number of perfect matchings. Two cases will be discussed in \cref{prop:5connExp,prop:4connExp}, respectively, and 
two other ones are as follows: Let $G$ be a planar 4-connected graph on $n$ vertices. If (1) $G$ contains a $3$-factor $F$ such that all connected components of $F$ are matchable and the order of the union of the bridgeless connected components of $F$ is linear in $n$; or (2) $G$ is 4-regular, then by \cite{CS12} and \cite{Esperet2011}, respectively, $G$ must contain at least an exponential number of perfect matchings.

\subsection{The 5-connected case}

We first show that every edge in a planar 5-connected graph is contained in a linear number of perfect matchings, something that might be true in planar 4-connected graphs, but we are not able to prove this. Thereafter, we show that it follows from a result of Aldred, Alahmadi, and Thomassen  that planar 5-connected triangulations contain an exponential number of perfect matchings.

\begin{proposition}\label{prop:5connPlEdgeInLinPMs}
    Let $G$ be a planar $5$-connected graph of even order $n$. Then every edge in $G$ is contained in $\Omega(n)$ perfect matchings.
\end{proposition}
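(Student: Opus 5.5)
Fix an edge $e=uv$ of $G$ and set $G':=G-u-v$. Perfect matchings of $G$ containing $e$ correspond bijectively to perfect matchings of $G'$, so it suffices to show $\pm(G')=\Omega(n)$. The graph $G'$ is planar, has even order $n-2$, and, since $G$ is $5$-connected, $G'$ is $3$-connected. Three-connectivity alone is not enough, as Theorem~\ref{prop:constantNumPMsClasses} shows. The plan is therefore to show that $G'$ is matching covered and then apply Lemma~\ref{lem:matchspacedim} to $G'$.

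\emph{Step 1: $G'$ is matchable.} Every planar $4$-connected graph of even order is $1$-extendable~\cite{P92}. Since $G$ is in particular $4$-connected, $e$ lies in a perfect matching of $G$, so $G'$ has a perfect matching.

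\emph{Step 2: every edge $f=xy$ of $G'$ is admissible in $G'$.} This amounts to showing that $\{e,f\}$ extends to a perfect matching of $G$. If $f$ is not admissible in $G'$, then by Tutte's theorem there is a set $S\subseteq V(G')$ with $o(G'-S-x-y)\ge |S|+2$. Setting $T:=S\cup\{u,v,x,y\}$, we get $o(G-T)\ge |T|-2$, and every odd component of $G-T$ sends at least $5$ edges to $T$ because $G$ is $5$-connected. Form the bipartite planar graph between $T$ and the components of $G-T$ by contracting each component and deleting edges inside $T$. Euler's formula bounds the number of its edges by $2(|T|+c)-4$, where $c\ge |T|-2$ is the number of components. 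Combining this with the lower bound of $5c$ edges gives $5c\le 2|T|+2c-4$, i.e.\ $3c\le 2|T|-4$, which contradicts $c\ge|T|-2$ whenever $|T|\ge 3$; here $|T|\ge 4$. Hence $f$ is admissible. (This is the known fact that planar $5$-connected graphs are $2$-extendable; I would cite it if available, e.g.\ from Plummer~\cite{P92}, and otherwise include the Euler-count argument above.) Consequently $F(G')=G'$.

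\emph{Step 3: counting.} Since $\delta(G')\ge 5-2=3$, we have $|E(G')|\ge \tfrac{3}{2}(n-2)$. Lemma~\ref{lem:matchspacedim} then gives $\pm(G')\ge \frac{|E(G')|-(n-2)}{2}+2\ge \frac{n-2}{4}+2=\Omega(n)$.

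The main obstacle is Step 2. There I must be careful that the Euler bound applies to a simple bipartite planar graph: parallel edges are removed, but distinct vertices of $T$ still contribute distinct neighbours, and a component adjacent to at least $5$ vertices of $T$ still has degree $\ge 5$ after simplification, since $5$-connectivity forces at least $5$ distinct attachment vertices. I must also handle the small cases, such as when $G-T$ has very few components.
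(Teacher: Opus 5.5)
Your proposal is correct and follows the paper's proof. You delete $u,v$, use $2$-extendability of $G$ (which the paper simply cites from Plummer) to see that $G-u-v$ is matching covered with minimum degree at least $3$, and then apply Lemma~\ref{lem:matchspacedim} to get $\frac{n+6}{4}$ perfect matchings through $e$. Your Euler-count proof of $2$-extendability is also sound, and the small cases you worry about do not arise: $c\ge |T|-2\ge 2$ guarantees that each component's attachment set in $T$ is a genuine separator of $G$, and $|T|\ge 4$ makes the bound $2N-4$ valid.
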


\begin{proof}

    Take an arbitrary edge $e=uv$ in a planar 5-connected $n$-vertex graph $G$, with $n$ even. Consider the graph $H = G-u-v$, which is 3-connected and 1-extendable since $G$ is 2-extendable~\cite{P92}. These two properties of $H$ imply that $|E(F(H))|=|E(H)|\ge \frac{3(n-2)}{2}$ and hence applying \cref{lem:matchspacedim} on $H$ gives $\pm(H) \ge\frac{n+6}{4}$. We can extend each perfect matching $M$ of $H$ to a perfect matching in $G$ by adding the edge $e$ to $M$. Thus, $e$ is contained in at least $\frac{n+6}{4}$ perfect matchings.
\end{proof}

We are able to give a full answer for the triangulation case. Alahmadi, Aldred, and Thomassen~\cite{AAT20} proved that every 5-connected planar or projective planar triangulation of even order $n$ contains at least $\alpha^n$ Hamiltonian cycles, for some real $\alpha > 1$. It is not difficult to adapt this proof to show that the same statement holds if in the preceding sentence ``Hamiltonian cycles'' is replaced by ``perfect matchings''. But there is a more direct way. Consider a graph $G$ of even order and let $h$ be the number of Hamiltonian cycles in $G$. Since every Hamiltonian cycle in $G$ is of the form $M_1 \cup M_2$ for suitable perfect matchings $M_1, M_2$, we have $h \le \pm(G)^2.$ The next statement now follows directly from the aforementioned theorem of Alahmadi, Aldred, and Thomassen.

\begin{proposition}\label{prop:5connExp}
    
    Every $5$-connected planar or projective planar triangulation of even order $n$ contains at least $\beta^n$ perfect matchings, for some real $\beta > 1$.
    
\end{proposition}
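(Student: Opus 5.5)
The plan is to deduce the statement directly from the theorem of Alahmadi, Aldred, and Thomassen~\cite{AAT20}, using the inequality $h \le \pm(G)^2$ observed just before the statement. Here $h$ denotes the number of Hamiltonian cycles of $G$.

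\emph{Step 1: the inequality.} Let $G$ be a $5$-connected planar or projective planar triangulation of even order $n$. Let $C$ be a Hamiltonian cycle of $G$. Since $n$ is even, $C$ has even length, so its edges split into two disjoint perfect matchings $M_1,M_2$ of $G$, obtained by taking alternate edges, and $C=M_1\cup M_2$. Thus $C\mapsto \{M_1,M_2\}$ sends Hamiltonian cycles to unordered pairs of perfect matchings. This map is injective, because $C$ is recovered as the union of the pair. Hence
$$h \le \binom{\pm(G)}{2}\le \pm(G)^2.$$

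\emph{Step 2: apply~\cite{AAT20}.} That theorem gives a real $\alpha>1$, independent of $G$, with $h\ge \alpha^n$ for every such triangulation of even order $n$. Combining with Step 1 gives $\pm(G)\ge \sqrt{h}\ge \alpha^{n/2}$. So we may take $\beta:=\sqrt{\alpha}>1$.

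\emph{Main obstacle.} There is essentially nothing difficult once~\cite{AAT20} is assumed. The one thing to check is that their result is stated uniformly for both surfaces and for all even $n$, with a single constant $\alpha$, perhaps only for $n$ at least some $n_1$. If it holds only for $n\ge n_1$, I would handle the finitely many small orders by shrinking $\beta$. This works because every such triangulation is $5$-connected and hence Hamiltonian by Tutte's theorem in the planar case, or by the cited result itself in the projective planar case. So $\pm(G)\ge 1$, in fact $\pm(G)\ge 2$ from the two matchings of one Hamiltonian cycle. Then $\beta$ can be chosen small enough but still greater than $1$ to cover those finitely many orders, using $\pm(G)\ge 2$.
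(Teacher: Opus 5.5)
Your proposal is correct and follows the paper's argument exactly: taking alternate edges of an even Hamiltonian cycle injects Hamiltonian cycles into pairs of perfect matchings, so $h\le \Phi(G)^2$, and the theorem of Alahmadi, Aldred, and Thomassen then gives $\beta=\sqrt{\alpha}$. Your extra treatment of small orders goes beyond the paper and is sound, with one fix: in the projective planar case, Hamiltonicity for those small orders should come from the Thomas--Yu theorem (4-connected projective-planar graphs are Hamiltonian), not from the cited result, which you were allowing might only apply for $n\ge n_1$.
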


In fact, one can show something somewhat stronger than \cref{prop:5connExp}. In~\cite{LQ22}, Lo and Qian adapt the above approach of Alahmadi, Aldred, and Thomassen to show the following result if one replaces ``perfect matchings'' by ``Hamiltonian cycles''. By the relationship between the number of Hamiltonian cycles and the number of perfect matchings we mentioned earlier, one immediately obtains the following result.

\begin{proposition}\label{prop:4connExp}
    Every $4$-connected planar or projective planar triangulation of even order $n$ and $O(n)$ $4$-separators has exponentially many perfect matchings.
\end{proposition}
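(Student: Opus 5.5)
The plan is to reduce the statement to the Lo--Qian theorem on Hamiltonian cycles, exactly as \cref{prop:5connExp} was reduced to the theorem of Alahmadi, Aldred, and Thomassen. The one thing the reduction needs is the inequality relating Hamiltonian cycles to perfect matchings.

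\emph{Step 1: input.} Let $G$ be a $4$-connected planar or projective planar triangulation of even order $n$ with $O(n)$ $4$-separators. By the result of Lo and Qian~\cite{LQ22}, $G$ has at least $\alpha^n$ Hamiltonian cycles for some real $\alpha>1$, where $\alpha$ depends only on the constant implicit in the $O(n)$ bound. I expect the main obstacle to be checking that~\cite{LQ22} really gives a \emph{uniform} exponential base under exactly these hypotheses, namely both surfaces and the $O(n)$ bound on $4$-separators. If their statement is only for the planar case, I would argue that their adaptation of the Alahmadi--Aldred--Thomassen method carries over to the projective plane in the same way the original did.

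\emph{Step 2: comparison.} Since $n$ is even, every Hamiltonian cycle $C$ of $G$ is an even cycle through all vertices. Its edges therefore split uniquely into two alternating classes $M_1,M_2$, and each class is a perfect matching of $G$. The map $C\mapsto\{M_1,M_2\}$ is injective, because $C=M_1\cup M_2$. Hence the number $h$ of Hamiltonian cycles satisfies $h\le\binom{\pm(G)}{2}\le \pm(G)^2$.

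\emph{Step 3: conclusion.} Combining the two steps gives $\pm(G)\ge\sqrt{h}\ge(\sqrt{\alpha})^n$. Setting $\beta:=\sqrt{\alpha}>1$ yields exponentially many perfect matchings.

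Apart from the check in Step 1, all remaining steps are routine.
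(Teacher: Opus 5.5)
Your proof is correct and follows the same route as the paper: invoke Lo and Qian to get exponentially many Hamiltonian cycles, and use that every Hamiltonian cycle of an even-order graph is the union of two perfect matchings, so $h\le\Phi(G)^2$. The fallback you sketch in Step 1 is not needed, since the Lo--Qian result as cited already covers both planar and projective planar triangulations.
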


\section{Counterexamples to a conjecture of Gr\"unbaum and Zaks}\label{sec:conjZaks}

For this last section we drop the planarity constraint and provide a structurally simple set of counterexamples to a published conjecture on the number of perfect matchings in $k$-connected graphs. More specifically, in \cite{Z71}, Zaks formulates a number of conjectures; he states that ``[m]ost of the conjectures here are due to Gr\"unbaum.'' 

We focus here on his Conjecture~5.

\begin{conjecture}[Conjecture~5 in~\cite{Z71}]
    Consider integers $k \ge 3$ and $n \ge 2k$. Then every matchable $k$-connected $n$-vertex graph $G$ satisfies $\pm(G) \ge k!$; and there are infinitely many graphs realising this.
\end{conjecture}

The last statement is certainly true, as Zaks describes $k$-connected $n$-vertex graphs with $\pm(G) = k!$, for any even $n \ge 2k$. However, for infinitely many $k$ there are $k$-connected graphs with $2k$ vertices with significantly fewer than $k!$ perfect matchings. These constitute counterexamples to Conjecture~5.

\begin{proposition}
    For any $\varepsilon > 0$ there is a $k_0$ so that for any $k\ge k_0$ we have $\pm(K_k \square K_2) < \varepsilon k!$. 
\end{proposition}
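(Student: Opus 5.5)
We count perfect matchings of $K_k \square K_2$ exactly. Write the two copies of $K_k$ as $A=\{a_1,\dots,a_k\}$ and $B=\{b_1,\dots,b_k\}$, with rungs $a_ib_i$. A perfect matching $M$ is determined by the set $S\subseteq[k]$ of indices whose rung lies in $M$. The remaining vertices $\{a_i: i\notin S\}$ must be matched among themselves inside the clique on $A$, and likewise for $B$. Hence $|[k]\setminus S|=k-j$ must be even, and the two halves are matched independently. Writing $m(r)$ for the number of perfect matchings of $K_r$ (so $m(r)=(r-1)!!$ for even $r$ and $0$ for odd $r$), we get
$$\pm(K_k\square K_2)=\sum_{\substack{0\le j\le k\\ k-j \text{ even}}}\binom{k}{j}\, m(k-j)^2 .$$

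Next we bound this sum against $k!$. Put $r=k-j$, which is even. Using $m(r)^2=((r-1)!!)^2\le (r-1)!!\,r!!=r!$, each term is at most $\binom{k}{r} r!=k!/(k-r)!=k!/j!$. This crude bound only yields $\sum_j 1/j!\le e$, which is not good enough, so a sharper estimate of $m(r)^2$ is needed. The identity $((r-1)!!)^2=r!\,\big(\binom{r}{r/2}2^{-r}\big)$ gives $m(r)^2=r!\,c_r$ with $c_r=\binom{r}{r/2}/2^r\le 1/\sqrt{\pi r/2}\le 1/\sqrt{r}$ for $r\ge 2$, and $c_0=1$. Therefore
$$\frac{\pm(K_k\square K_2)}{k!}=\sum_{\substack{j\le k\\ k-j\text{ even}}}\frac{c_{k-j}}{j!}.$$

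Finally we split this sum at a threshold $J$. For the terms with $j\le J$ we have $k-j\ge k-J$, so $c_{k-j}\le (k-J)^{-1/2}$, and their total is at most $e\,(k-J)^{-1/2}$. For the terms with $j>J$ we use $c\le 1$, so their total is at most $\sum_{j>J}1/j!$. Given $\varepsilon$, first choose $J$ so that $\sum_{j>J}1/j!<\varepsilon/2$, then choose $k_0$ so that $e(k_0-J)^{-1/2}<\varepsilon/2$. For $k\ge k_0$ this gives $\pm(K_k\square K_2)<\varepsilon k!$.

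The term $j=k$ has $c_0=1$ and falls into the tail, which the choice of $J$ already controls. The main point to verify carefully is the central binomial bound $\binom{r}{r/2}\le 2^r/\sqrt{r}$ for even $r\ge 2$; a standard induction suffices, or one can instead bound the product $\prod_{i\le r/2}\frac{2i-1}{2i}\le 1/\sqrt{r}$ directly.
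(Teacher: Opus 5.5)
Your proof is correct and follows essentially the same route as the paper. Both use the same exact count, rewritten as $\sum_j c_{k-j}/j!$ (the paper's $\sum_j b_j/(k-2j)!$, indexed by internal pairs rather than rungs), and both split the sum into a range where the central-binomial factor is $O(k^{-1/2})$ and a range controlled by factorial decay. The differences are cosmetic: you use the elementary bound $\binom{r}{r/2}\le 2^r/\sqrt{r}$ instead of Stirling, and you cut at a fixed $J$ rather than at $k/4$, so you get the limit only qualitatively where the paper gets the explicit rate $O(k^{-1/2})$ (your argument also gives this rate with $J=\lfloor k/2\rfloor$).
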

\begin{proof}

    Consider the prism over a complete graph, that is: the Cartesian product $\Pi_k := K_k \square K_2$. We denote by $M$ the set of edges of $\Pi_k$ not contained in one of the two copies of $K_k$. 

    A perfect matching in $\Pi_k$ uses some number $i$ of edges in $M$, $0 \le i \le |M|$, such that the remaining number of vertices $k-i$ in each $K_k$ is even. Since the number of perfect matchings in a complete graph of order $2j$ is $(2j-1)!!$, we have

$$\pm(\Pi_k) = \sum_{j=0}^{\lfloor k/2\rfloor} \binom{k}{2j}\bigl((2j-1)!!\bigr)^2.$$

Put $a_k := \pm(\Pi_k)/k!$. By

 \[
(2j-1)!!=\frac{(2j)!}{2^j j!}
 \]
we get
\[
a_k
=
\sum_{j=0}^{\lfloor k/2\rfloor}
\frac{\binom{2j}{j}}{4^j\,(k-2j)!}.
\]

\noindent We now show that $\lim_{k\to\infty} a_k = 0.$
Let
\[
b_j:=\frac{\binom{2j}{j}}{4^j}.
\]

We have \(0\le b_j\le 1\) for all \(j\), since \(\binom{2j}{j}\le 4^j\). Furthermore, it can be inferred from Stirling's work that 
$n! = \sqrt{2\pi n}\,(n/e)^n(1+O(\frac{1}{n}))$. This yields
$$\binom{2j}{j} = \frac{(2j)!}{(j!)^2} \sim \frac{\sqrt{4\pi j}\,(2j/e)^{2j}}{(\sqrt{2\pi j}\,(j/e)^j)^2} = \frac{\sqrt{4\pi j}\,2^{2j} j^{2j}/e^{2j}}{2\pi j \cdot j^{2j}/e^{2j}} = \frac{4^j}{\sqrt{\pi j}}$$
and
$$\frac{\binom{2j}{j}}{4^j} = \frac{1}{\sqrt{\pi j}}\left(1 + O\!\left(\tfrac{1}{j}\right)\right).$$
Hence, there is a constant \(C>0\) such that
\[
b_j\le \frac{C}{\sqrt{j+1}}
\qquad\text{for all } j\ge 0.
\]
Now split the sum defining \(a_k\) into two parts:
\[
a_k = \sum_{j=0}^{\lfloor k/2\rfloor}\frac{b_j}{(k-2j)!} = \sum_{j=0}^{\lfloor k/4\rfloor}\frac{b_j}{(k-2j)!}
\;+\;
\sum_{j=\lfloor k/4\rfloor+1}^{\lfloor k/2\rfloor}\frac{b_j}{(k-2j)!}.
\]
Let us denote the first sum by $S_1$ and the second sum by $S_2$. For \(S_1\), we have \(k-2j\ge \lfloor k/2 \rfloor \), hence
\[
S_1 \le \sum_{j=0}^{\lfloor k/4\rfloor}\frac{1}{\lfloor k/2 \rfloor !}
\le \frac{k/4+1}{\lfloor k/2\rfloor!}\xrightarrow[k\to\infty]{}0.
\]
For \(S_2\), we have \(j>\frac{k}{4}\), so \(j+1\ge \frac{k}{4}\), and therefore
\[
b_j\le \frac{C}{\sqrt{j+1}}\le \frac{2C}{\sqrt{k}}.
\]
Thus
\[
S_2
\le \frac{2C}{\sqrt{k}}\sum_{j=0}^{\lfloor k/2\rfloor}\frac{1}{(k-2j)!}
\le \frac{2C}{\sqrt{k}}\sum_{m=0}^{\infty}\frac{1}{m!}
= \frac{2Ce}{\sqrt{k}}
\xrightarrow[k\to\infty]{}0.
\]

\end{proof}

\section{Notes}\label{sec:notes}

We conclude this article with some notes, open problems, and possible directions for future research.

\bigskip

\noindent \textbf{1.} Consider integers $k \ge 3$ and $n \ge 2k$, and a matchable $k$-connected $n$-vertex graph $G$. By the result presented in Section~4, we know that $G$ does not necessarily satisfy $\pm(G) \ge k!$. But what would be a non-trivial lower bound?

\bigskip

\noindent \textbf{2.} We have infinite families of planar 3-connected graphs with a non-zero constant number of perfect matchings for girth 3 and girth 4, but not for girth 5. 

This naturally raises the following question.

\begin{problem}
Does there exist an infinite family of matchable planar $3$-connected graphs of girth $5$ with a constant number of perfect matchings? 
\end{problem}

We verified computationally that all matchable planar 3-connected graphs of girth 5 and even order at most 50 are matching covered. We used \texttt{plantri} \cite{BM07} for the generation of planar graphs and our own code (\href{https://github.com/AGT-Kulak/countpm}{https://github.com/AGT-Kulak/countpm}) for checking whether a graph is matching covered.

We also do not know whether there exist infinitely many planar 3-connected graphs with minimum degree 5 which have a constant number of perfect matchings.

\bigskip

\noindent \textbf{3.} We conjecture the following.

\begin{conjecture}
    For large orders, the double wheels have the minimum number of perfect matchings among all planar $4$-connected triangulations.
\end{conjecture}

This mirrors the Hakimi-Schmeichel-Thomassen Conjecture stating that double wheels have the minimum number of Hamiltonian cycles among all planar 4-connected triangulations \cite{HST79}. Liu, Wang, and Yu~\cite{LWY22} showed that planar 4-connected triangulations have at least quadratically many Hamiltonian cycles; note that double wheels have a quadratic number of Hamiltonian cycles.

\subsection*{Acknowledgements}

\noindent  
 We would like to thank Nishad Kothari, Davide Mattiolo, Brendan McKay, and Sreejith K.~Pallathumadam for interesting discussions on the contents of this manuscript.

The research of Jan Goedgebeur and Tibo Van den Eede was supported by Internal Funds of KU Leuven and a grant of the Research Foundation Flanders (FWO) with grant number G0AGX24N. 
Moreover, Tibo Van den Eede was also supported by an FWO travel grant with grant number V401626N. Jorik Jooken was supported by an FWO Postdoctoral Fellowship with grant number 1222524N.

\bibliographystyle{graphtheory}
\bibliography{ref}

@article{Esperet2011,
title = {Exponentially many perfect matchings in cubic graphs},
journal = {Adv. Math.},
volume = {227},
number = {4},
pages = {1646--1664},
year = {2011},
issn = {0001-8708},
doi = {https://doi.org/10.1016/j.aim.2011.03.015},
author = {L. Esperet and F. Kardo{\v{s}}  and A. D. King and D. Kr{\'a}l' and S. Norine},
}

@article{CS12,
title = {Perfect matchings in planar cubic graphs},
journal = {Combinatorica},
volume = {32},
number = {4},
pages = {403--424},
year = {2012},
doi = {http://dx.doi.org/10.1007/s00493-012-2660-9},
author = {M. Chudnovsky and P. Seymour},
}

@article{EPL82,
  title={Brick decompositions and the matching rank of graphs},
  author={Edmonds, J. and Pulleyblank, W. R. and Lov{\'a}sz, L.},
  journal={Combinatorica},
  volume={2},
  number={3},
  pages={247--274},
  year={1982},
  publisher={Springer}
}

@article{LWY22,
  title={Counting Hamiltonian cycles in planar triangulations},
  author={Liu, X. and Wang, Z. and Yu, X.},
  journal={J. Combin. Theory Ser. B},
  volume={155},
  pages={256--277},
  year={2022},
  publisher={Elsevier}
}

@article{P92,
	title={Extending matchings in planar graphs {IV}},
	author={Plummer, M. D.},
	journal={Discrete Math.},
	volume={109},
	number={1-3},
	pages={207--219},
	year={1992},
	publisher={Elsevier}
}

@article{Z71,
	title={On the 1-factors of $n$-connected graphs},
	author={Zaks, J.},
	journal={J. Combin. Theory Ser. B},
	volume={11},
	number={2},
	pages={169--180},
	year={1971},
	publisher={Elsevier}
}

@article{L72,
	title={On the structure of factorizable graphs},
	author={Lov{\'a}sz, L.},
	journal={Acta Math. Hung.},
	volume={23},
	number={1-2},
	pages={179--195},
	year={1972},
	publisher={Akad{\'e}miai Kiad{\'o}, co-published with Springer Science+ Business Media BV~…}
}

@article{AAT20,
	title={Cycles in 5-connected triangulations},
	author={Alahmadi, A. and Aldred, R. E. L. and Thomassen, C.},
	journal={J.~Combin. Theory Ser.~B},
	volume={140},
	pages={27--44},
	year={2020},
	publisher={Elsevier}
}

@article{M76,
  title={{\"U}ber die {A}nzahl der 1-{F}aktoren in 2-fach zusammenh{\"a}ngenden {G}raphen},
  author={Mader, W.},
  journal={Math. Nachr.},
  volume={74},
  number={1},
  pages={217--232},
  year={1976},
  publisher={Wiley Online Library}
}

@article{HST79,
  title={On the number of Hamiltonian cycles in a maximal planar graph},
  author={Hakimi, S. L. and Schmeichel, E. F. and Thomassen, C.},
  journal={J. Graph Theory},
  volume={3},
  number={4},
  pages={365--370},
  year={1979},
  publisher={Wiley Online Library}
}

@book{YL09,
  title={Graph factors and matching extensions},
  author={Yu, Q. R. and Liu, G.},
  year={2009},
  publisher={Springer}
}

@article{LQ22,
  title={Hamiltonian cycles in 4-connected planar and projective planar triangulations with few 4-separators},
  author={Lo, O. S. and Qian, J.},
  journal={SIAM J. Discrete Math.},
  volume={36},
  number={2},
  pages={1496--1501},
  year={2022},
  publisher={SIAM}
}

@article{OV14,
  title={2-edge-Hamiltonian-connectedness of 4-connected plane graphs},
  author={Ozeki, K. and Vr{\'a}na, P.},
  journal={European J. Combin.},
  volume={35},
  pages={432--448},
  year={2014},
  publisher={Elsevier}
}

@article{BM07,
  title={Fast generation of planar graphs},
  author={Brinkmann, G. and McKay, B. D.},
  journal={MATCH Commun. Math. Comput. Chem.},
  volume={58},
  number={1},
  pages={323--357},
  year={2007}
}

@article{HoG,
  title={House of Graphs 2.0: A database of interesting graphs and more},
  author={Coolsaet, K. and D’hondt, S. and Goedgebeur, J.},
  journal={Discrete Appl. Math.},
  volume={325},
  pages={97--107},
  year={2023},
  publisher={Elsevier},
  note = {{A}vailable at: \url{https://houseofgraphs.org}}  
}

\end{document}